\documentclass[10pt,reqno]{amsart}


\addtolength{\textwidth}{2.4cm}
\addtolength{\evensidemargin}{-2.05cm}
\addtolength{\oddsidemargin}{-2.15cm}
\addtolength{\textheight}{1.1cm} \addtolength{\topmargin}{-0.5cm}


\usepackage{amssymb}
\usepackage{xypic}
\xyoption{all}
%
%
%


\def\umono{\ar@{_{(}->}[u]}
\def\uumono{\ar@{_{(}->}[uu]}

\def\lmono{\ar@{_{(}->}[l]}
\def\llmono{\ar@{_{(}->}[ll]}

%
%
\newcommand{\Z}{{\mathbb Z}}

\newcommand{\Q}{{\mathbb Q}}

\newcommand{\C}{{\mathbb C}}

%
%


%
%
%

%

%

\newcommand{\holim}{\operatornamewithlimits{holim}}

%
%
%
%

\newcommand{\aut}[2]{\operatorname{aut}_{#1}(#2)}
\newcommand{\Aut}[2]{\operatorname{Aut}_{#1}(#2)}
\newcommand{\Hom}[2]{\operatorname{Hom}(#1,#2)}

\newcommand{\map}[3]{\operatorname{map}(#2,#3)_{#1}}
\newcommand{\mapp}[3]{\operatorname{map}_*(#2,#3)_{#1}}

\renewcommand{\ker}{\operatorname{Ker}\nolimits}

%
%
%
%
\newcommand{\A}{\ifmmode{\mathcal{A}}\else${\mathcal{A}}$\fi}
\newcommand{\K}{\ifmmode{\mathcal{K}}\else${\mathcal{K}}$\fi}
\newcommand{\U}{\ifmmode{\mathcal{U}}\else${\mathcal{U}}$\fi}
\newcommand{\T}{\ifmmode{\mathcal{T}}\else${\mathcal{T}}$\fi}
\newcommand{\FF}{\ifmmode{\mathcal{F}}\else${\mathcal{F}}$\fi}
\newcommand{\LL}{\ifmmode{\mathcal{L}}\else${\mathcal{L}}$\fi}

%
%
%
%
\newcommand{\ncov}[2]{#1 \langle #2 \rangle}

\newcommand{\m}{morphism}


\newtheorem{theorem}{Theorem}[section]
\newtheorem{proposition}[theorem]{Proposition}
\newtheorem{corollary}[theorem]{Corollary}
\newtheorem{lemma}[theorem]{Lemma}
\theoremstyle{definition}
\newtheorem{definition}[theorem]{Definition}
\newtheorem{remark}[theorem]{Remark}

\newtheorem{construction}[theorem]{Construction}


\def\N{{\mathbb{N}}}


\title[Localization genus]{Localization genus}


\author{Jesper M.~M\o ller}
\address{Matematisk Institut\\
 Universitetsparken 5\\
 DK--2100 K\o benhavn}
\email{moller@math.ku.dk}
\urladdr{htpp://www.math.ku.dk/~moller}

\author{J\'er\^{o}me Scherer}
\address{\'Ecole Polytechnique f\'ed\'erale de Lausanne\\
 CH-1015 Lausanne, Switzerland}
\email{jerome.scherer@epfl.ch}
\urladdr{http://hessbellwald-lab.epfl.ch/Scherer}


\thanks{The first author is supported by the Danish National Research Foundation through
  the Centre for Symmetry and Deformation (DNRF92) and by Villum
  Fonden through the project Experimental Mathematics in Number
  Theory, Operator Algebras, and Topology. The second author is supported by FEDER/MEC grant
MTM2013-42293-P}

\subjclass[2000]{Primary 55S45; Secondary 55R15, 55R70, 55P20,
22F50}

\begin{document}
\date{\today}
\maketitle

\begin{abstract}
Which spaces look like an $n$-sphere through the eyes of the
$n$-th Postnikov section functor and the $n$-connected cover
functor? The answer is what we call the Postnikov genus of the
$n$-sphere. We define in fact the notion of localization genus for
any homotopical localization functor in the sense of Bousfield and
Dror Farjoun. This includes exotic genus notions related for example
to Neisendorfer localization, or the classical Mislin genus, which 
corresponds to rationalization.
\end{abstract}


\maketitle


\section*{Introduction}
\label{sec intro}
Classically the genus of a nilpotent space $X$ of finite type, as
introduced by Mislin in \cite{mislin71}, consists of all homotopy
types of nilpotent spaces $Y$ of finite type such that the
localizations $Y_{(p)}$ and $X_{(p)}$ coincide at any prime $p$.
That is, spaces in the same genus as $X$ cannot be distinguished
from $X$ if one looks at them through the eyes of $p$-localization.
Another equivalent definition can be given in terms of
rationalization and $p$-completion, \cite{MR0402737}.

We introduce in this article the notion of \emph{localization
genus}. A localization functor $L$ in the category of spaces (or
simplicial sets), as introduced by Bousfield, \cite{MR57:17648},
Farjoun, \cite{Dror}, is a homotopy functor equipped with a natural
transformation $\eta$ from the identity which is idempotent up to
homotopy. The main point to study such functors is that it
subsumes the notions of localization at a prime or a set of primes (e. g.
rationalization) and $p$-completion, but also Postnikov sections,
Quillen's plus-construction, and other nullification or
periodization functors such as $P_{B\Z/p}$, which plays a central
role in the Sullivan conjecture, \cite{Miller}. We write $\bar L X$
for the homotopy fiber of the natural map $\eta_X: X \rightarrow
LX$. We define thus two genus sets associated to~$L$ for any simply connected
CW-space $X$ of finite type.

\begin{enumerate}
 \item The {\em extended $L$ genus set\/} for $X$ is the set
 $\bar G_L(X) = \{ Y \mid LY \simeq LX, \ \bar LY \simeq \bar LX \}$
 of homotopy types $Y$ of CW-spaces such that $LY=LX$ and $\bar LY =
 \bar LX$.

 \item The {\em $L$-genus set\/} for $X$ is the subset $G_L(X)$ of
 $\bar G_L(X)$ represented by CW-spaces of finite type.
\end{enumerate}

Our definition is motivated by the classical definition of the
(completion) genus set, \cite{mislin71}, \cite[Definition
3.2]{MR0402737}, and the extended (completion) genus set studied by
McGibbon in \cite{chuck94}. We show in fact in Proposition~\ref{prop:classicalG}
that when $L$ is rationalization, one gets back these classical notions. To illustrate
our point of view we go through the computation of the extended rationalization
genus $\bar G(S^n)$ of an odd sphere, Theorem~\ref{thm:extendedgenus}, and we 
characterize in Corollary~\ref{cor:hiltongenus} those elements in 
$\bar G(S^n)$ corresponding to elements in the extended genus of the \emph{abelian group}
of integers, as studied by Hilton in \cite{hilton88pseudo}.

To tackle technically harder problems we rely on Dywer and Kan's classifiyng space
for towers of fibrations, \cite{dwks:towers}, a tool which has proven to be handy in
similar situations, \cite{MR2669687}. This allows us in particular to do explicit computations
of Postnikov genus sets for spheres and complex projective spaces.

\medskip

\noindent
{\bf Theorem~\ref{thm:classification}.}
\emph{The extended Postnikov genus set $\overline G_{[n]}(S^n)$ of homotopy types
of spaces $Y$ such that $Y[n] \simeq K(\Z, n)$ and $Y\langle n
\rangle \simeq S^n \langle n \rangle$ is uncountable, in bijection
with $\prod_p \N_+$, where the product is taken over all primes.}

\medskip

We also present in Section~\ref{sec:joe} a computation related to Neisendorfer's functor, \cite{MR96a:55019}, 
and the Sullivan conjecture. The localization genus
computations show combined features of the space one focuses on and the chosen localization functor. The
notion of genus quantifies in which sense it is (not) sufficient to consider a given space locally, through the
eyes of a localization functor $L$ and the associated fiber~$\bar L$.

\medskip

\noindent
{\bf Acknowledgements.} This project started during a visit of the first author at the Universitat Aut\`onoma de Barcelona in 2008 
and was rebooted during a visit of the second author at the Centre for Symmetry and Deformation in Copenhagen seven years later.

\section{Genus and extended genus}
\label{sec:definition}
Let $L$ be a homotopical localization functor, i.e. a coaugmented and idempotent homotopy functor in the category of spaces. It 
is sometimes more convenient to work in the Quillen equivalent category of simplicial sets, in particular when one needs
models for mapping spaces. We will clearly say so when we do so. In practice localization functors arise as follows. To any map $f$
one associates a functor $L_f$ which inverts $f$ in a universal way, \cite{Dror} and \cite{MR57:17648}. Clever choices for the
map yield homological localization, localization at a set of primes such as rationalization, Quillen's plus construction, Postnikov sections, etc.

The homotopy fiber of the coaugmentation $X \rightarrow LX$ is $\overline{L} X$.

\begin{definition}
\label{def:Lgenus}
{\rm Let $L$ be a localization functor and $X$ a simply connected
 CW-complex of finite type.
 \begin{itemize}
 \item The {\em extended-$L$ genus set\/} for $X$ is the set
\begin{equation*}
 \overline{G}_L(X) = \{ Y \mid LY \simeq LX, \ \overline{L}Y \simeq \overline{L}X \}
\end{equation*}
of homotopy types $Y$ of CW-spaces such that $LY=LX$ and $\overline{L}Y =
\overline{L}X$.
\item The {\em $L$-genus set\/} for $X$ is the subset $G_L(X)$ of
 $\overline{G}_L(X)$ represented by CW-complexes of finite type.
 \end{itemize}}
\end{definition}

The reason for this ``generic'' terminology comes from the relationship with the classical notion of genus.
Recall that Mislin's definition, \cite{mislin71}, is given in terms of localization at primes: Two spaces $X$ and $Y$ 
belong to the same genus set if their localizations $X_{(p)}$ and $Y_{(p)}$ are homotopy equivalent at every prime $p$. 
This is a stronger requirement than merely asking for equivalent $p$-completions $X_p^\wedge$ and $Y_p^\wedge$, 
for any prime $p$, and equivalent rationalizations $X_{0}$ and $Y_{0}$,
as shown for example by Belfi and Wilkerson in \cite{MR0418086}. We will focus on the completion genus set
as in \cite[Definition~3.5]{MR0402737} and the extended completion genus set, \cite{chuck94}. We denote by $X^\wedge$
the product of all $p$-completions.

\begin{definition}
\label{def:oldgenus}
 Let $X$ be a simply connected CW-complex of finite type.
 \begin{itemize}
 \item The {\em extended genus set\/} of $X$ is the set $\overline{G}(X)$ of
   homotopy types of CW-complexes $Y$ such that $Y^\wedge = X^\wedge$
   and $Y_0 = X_0$.
 \item The {\em genus set\/} of $X$ is the subset $G(X)$ of $\overline{G}(X)$
   represented by CW-complexes $Y$ of finite type.
 \end{itemize}
\end{definition}

We show now that the classical completion genus coincides with our localization
genus, when the chosen localization functor $L$ is rationalization. Since we restrict
our attention to simply connected spaces here, one can choose the map $f$ to be the
wedge of degree $p$ maps on the $2$-sphere, taken over all primes $p$. Then the
Bousfield localization functor $L_f$ is rationalization on simply connected spaces.
When $LX=X_0$ is rationalization, we write $X_\tau$ for $\overline{L}X$, the fibre of the
rationalization map $X \to X_0$ and call it the \emph{torsion space} of $X$.

\begin{proposition}
\label{prop:classicalG}
Let $X$ be a simply connected CW-complex of finite type.  The
extended rationali\-zation-genus set
 \begin{equation*}
   \overline{G}_0(X) =  \{ Y \mid Y_0 \simeq X_0, \
   Y_\tau \simeq X_\tau \} =
   \{ Y \mid Y_0 \simeq X_0, \
   Y^\wedge \simeq X^\wedge \}
 \end{equation*}
 is the extended genus set $\overline{G}(X)$ and the rationalization-genus set $G_0(X)$
 coincides with the classical genus set $G(X)$.
\end{proposition}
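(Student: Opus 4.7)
The plan is to reduce the stated double equality to a single equivalence of conditions. Namely, for simply connected finite type CW-complexes $Y$ one has $Y_\tau \simeq X_\tau$ if and only if $Y^\wedge \simeq X^\wedge$. Granting this equivalence, the first equality in the proposition is immediate from Definition~\ref{def:Lgenus} applied to $L$ equal to rationalization (so that $LY = Y_0$ and $\overline{L}Y = Y_\tau$), the second equality is exactly the classical Definition~\ref{def:oldgenus}, and the assertion $G_0(X) = G(X)$ follows by intersecting both characterizations with the subclass of finite type complexes.

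For the implication $Y^\wedge \simeq X^\wedge \Rightarrow Y_\tau \simeq X_\tau$, I would invoke the arithmetic fracture square, which for simply connected finite type spaces presents $Y$ as the homotopy pullback of $Y_0 \to (Y^\wedge)_0 \leftarrow Y^\wedge$. Taking horizontal homotopy fibres identifies $Y_\tau = \hofib(Y \to Y_0)$ with $\hofib(Y^\wedge \to (Y^\wedge)_0)$, and likewise for $X$. Since rationalization is a functor, an equivalence $Y^\wedge \simeq X^\wedge$ induces a compatible equivalence $(Y^\wedge)_0 \simeq (X^\wedge)_0$, hence an equivalence of fibres $Y_\tau \simeq X_\tau$.

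For the converse, I would apply Bousfield--Kan $p$-completion to the fibre sequence $Y_\tau \to Y \to Y_0$. The homotopy groups of $Y_0$ are $\Q$-vector spaces, hence uniquely $p$-divisible, so $(Y_0)^\wedge_p$ is contractible and the fibre sequence yields $(Y_\tau)^\wedge_p \simeq Y^\wedge_p$ at each prime $p$. Taking the product over all primes gives $Y^\wedge \simeq (Y_\tau)^\wedge$, and symmetrically $X^\wedge \simeq (X_\tau)^\wedge$; thus $Y_\tau \simeq X_\tau$ implies $Y^\wedge \simeq X^\wedge$. The main technical hurdle here is to justify compatibility of $p$-completion with this fibre sequence, since $Y_\tau$ is generally not of finite type: its homotopy groups involve copies of $\Q/\Z$ coming from the cokernel of the rationalization of $\pi_*(Y)$. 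Fortunately $Y_\tau$ is nilpotent, being the homotopy fibre of a map between nilpotent spaces, which is enough for the Bousfield--Kan machinery to apply.
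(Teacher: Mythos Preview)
Your proof is correct and follows essentially the same approach as the paper: reduce to the equivalence $Y_\tau \simeq X_\tau \iff Y^\wedge \simeq X^\wedge$, obtain one direction by completing the fibration $Y_\tau \to Y \to Y_0$ (the paper cites the nilpotent fibration lemma \cite[II.4.8]{MR51:1825}, which is exactly the justification you invoke), and the other direction from Sullivan's arithmetic square identifying $Y_\tau$ with the fibre of $Y^\wedge \to (Y^\wedge)_0$. Your write-up is in fact somewhat more careful than the paper's, spelling out why the completion of $Y_0$ vanishes and flagging the nilpotency needed since $Y_\tau$ is not of finite type; one small slip is your opening restriction to finite type $Y$, which is not part of the extended genus---but your subsequent arguments do not actually rely on it.
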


\begin{proof}
To see this, note that $X_\tau \simeq Y_\tau$ if and only if  $X^\wedge \simeq
Y^\wedge$. Indeed, if we complete the fibration $X_\tau \to X \to
X_0$ (by the nilpotent fibration Lemma \cite[II.4.8]{MR51:1825}) 
we see that $(X_\tau)^\wedge = X^\wedge$, and Sullivan's
arithmetic square, \cite[VI.8.1]{MR51:1825}, shows that $X_\tau$ is (also) the fibre of
$X^\wedge \to (X^\wedge)_0$.
\end{proof}

Let us finally remark that all spaces
in $G(X)$ are finite complexes when $X$ is a finite complex. This
comes from the fact that, when $X$ is of finite type, the integral
homology groups of any space in the genus set of $X$ are those of
$X$. There is thus a Moore-Postnikov decomposition of such a space
as successive homotopy cofibers of maps between (finite) Moore
spaces, see for example \cite[Chapter~8]{MR0198466}.

\section{The extended rationalization genus of an odd sphere}
\label{sec:ratodd}
In this section we turn our attention to a concrete example and propose an 
explicit computation of the rationalization-genus for odd spheres.
Let $n$ be an odd natural number. The extended rationalization genus set of the
odd-dimensional sphere $S^n$
is according to \cite[Theorem~3]{chuck94} an uncountable set. We
offer an explicit description of this extended genus set and identify the
elements known as pseudo-spheres. For this we will need some elementary abelian
group theory.

Any torsion free abelian group $A$ of rank one can be seen, up to
isomorphism, as a subgroup of $\Q$ containing $\Z$. For each prime
$p$, let $k_p(A) = \max \{ r \geq 0 \mid 1 \in p^rA \}$ denote the
\emph{height} of $1$ at $p$. The {\em height sequence\/} of $A$ is the
sequence $\Pi(A) = (k_p(A))_p$ of non-negative (or infinite)
integers. Two sequences $(k_p)$ and $(m_p)$ are similar if the sum of
the differences $|k_p -m_p|$ is finite. This means that the sequences
differ in only a finite number of primes, and have $\infty$ in the same
coordinate. A {\em type\/} is a
similarity class of sequences. As explained in \cite{MR41:333},
or \cite[Theorem 10.47]{rotman},
isomorphism types of torsion free abelian groups of rank one are in
bijection with types.

We start now with the fibration $S^n_\tau \rightarrow S^n
\rightarrow (S^n)_{0}$ and will use that, since $n$ is odd,
$(S^n)_{0} \simeq K(\Q, n) \simeq M(\Q, n)$ and $S^n_\tau \simeq
M(\Q/\Z, n-1)$. For any space $Y$ in the extended rationalization
genus of $S^n$ we have a fibration sequence $M(\Q/\Z, n-1)
\rightarrow Y \rightarrow K(\Q, n)$. The homotopy long exact
sequence yields an exact sequence
\[
0 \rightarrow \pi_n Y \rightarrow \Q \xrightarrow{\partial} \Q/\Z
\rightarrow \pi_{n-1} Y \rightarrow 0 \, .
\]
The idea is that the connecting homomorphism $\partial$ determines
the homotopy type of $Y$.

\begin{lemma}
\label{lem:rankone}
There is a bijection between isomorphism types of torsion free
abelian groups of rank one and the double coset $\Q^\times
\backslash \Hom{\Q} {\Q/\Z} / \hat \Z$, where the units in $\Q$ act
by pre-composition and the automorphisms $\hat \Z$ of $\Q/\Z$ by
post-composition.
\end{lemma}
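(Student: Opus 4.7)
The plan is to identify both sides of the claimed bijection with height sequences modulo similarity, using the restricted product of $p$-adic fields as an intermediary. I would begin by decomposing $\Q/\Z = \bigoplus_p \Z[1/p]/\Z$ into its $p$-primary Pr\"ufer components. For each prime $p$, since multiplication by any prime $\ell \neq p$ is invertible on $\Z[1/p]/\Z$, one computes $\Hom{\Q}{\Z[1/p]/\Z} \cong \Q_p$, with $a \in \Q_p$ corresponding to the map $q \mapsto q a \bmod \hat\Z_p$. Assembling the components $\phi_p = \pi_p \circ \phi$ then identifies $\Hom{\Q}{\Q/\Z}$ with the restricted product $\{(a_p) \in \prod_p \Q_p : a_p \in \hat\Z_p \text{ for almost all } p\}$, where the restriction encodes the requirement that $\phi(1)$ lie in the direct sum.

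Under this identification, pre-composition by $\lambda \in \Q^\times$ is multiplication by the diagonally embedded $\lambda$, and post-composition by a unit $(u_p) \in \hat\Z^\times = \operatorname{Aut}(\Q/\Z)$ (the lemma writes $\hat\Z$ by a mild abuse for its group of units) is componentwise multiplication. Quotienting on the right collapses each $\Q_p^\times$-coset to its $p$-adic valuation, so the right cosets biject with sequences $(k_p)$ in $\Z \cup \{\infty\}$ satisfying $k_p \ge 0$ for almost all $p$. The remaining left $\Q^\times$-action, via $\lambda = \prod p^{m_p}$, then shifts such a sequence by an integer sequence of finite support.

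To connect this to rank-one torsion-free abelian groups I would send a representative $(a_p)$ to the kernel $A = \{q \in \Q : q a_p \in \hat\Z_p \text{ for all } p\}$. For $(a_p) \in \hat\Z$ the kernel contains $\Z$ and has height sequence exactly $(v_p(a_p))_p$, and every type is realized by taking $a_p = p^{k_p}$ (or $0$ when $k_p = \infty$). The $\hat\Z^\times$-action fixes $A$, while $\lambda \in \Q^\times$ replaces it by $\lambda^{-1} A$, an abstractly isomorphic subgroup. Conversely, given similar height sequences $(k_p) \sim (k_p')$, units $(u_p) \in \hat\Z^\times$ normalize coordinates to prime powers and the rational $\lambda = \prod p^{k_p'-k_p}$ (a finite product, by similarity) carries one representative to the other. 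Combined with the classification by types recalled just above the lemma, this yields the bijection.

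The main obstacle is the bookkeeping between the ad\`elic convention, where valuations live in $\Z \cup \{\infty\}$, and the height-sequence convention, where natural representatives live in $\Z_{\ge 0} \cup \{\infty\}$. One must check that the left $\Q^\times$-shift turns equality of valuations precisely into the similarity relation defining types, and treat carefully the primes where $a_p = 0$ (equivalently $k_p = \infty$) so that the bijection is genuinely well-defined at these ``infinite'' coordinates.
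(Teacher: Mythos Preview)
Your argument is correct and complete, but it takes a genuinely different route from the paper. The paper argues directly: it defines $\alpha\colon \Hom{\Q}{\Q/\Z} \to \{\text{rank-one torsion free groups}\}$ by $\partial \mapsto \ker\partial$, and a section $\beta$ sending $A \subset \Q$ to the composite $\Q \twoheadrightarrow \Q/A \cong \bigoplus_{p \in J} \Z_{p^\infty} \hookrightarrow \Q/\Z$, where $J$ is the set of primes of finite height. Then $\alpha\beta(A) = A$ on the nose, and $\beta\alpha(\partial)$ differs from $\partial$ only by the choices absorbed by the $\Q^\times \times \hat\Z^\times$ action. The type classification is never invoked.

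Your approach instead identifies $\Hom{\Q}{\Q/\Z}$ with the finite ad\`eles $\mathbb A_f = \prod'_p \Q_p$, recognises the two actions as the diagonal $\Q^\times$ and the componentwise $\hat\Z^\times$, and reduces the double coset to height sequences modulo similarity via $p$-adic valuations; only then do you appeal to the Baer classification by types. This is more structural and makes transparent \emph{why} types appear, at the cost of importing the classification theorem rather than reproving the bijection from scratch. The paper's argument is shorter and self-contained; yours places the lemma in its natural ad\`elic setting and would generalise more readily.
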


\begin{proof}
We define two maps. The first $\alpha: \Hom{\Q} {\Q/\Z} \rightarrow
\{ A \, | \, A \ \hbox{\rm torsion free abelian of rank one} \}$
sends a homomorphism $\partial$ to its kernel. The second one we call $\beta$. 
Given a subgroup $A$ of $\Q$, let $J$ be the subset of all primes
consisting of those primes $p$ for which $n_p \neq \infty$. Then the
quotient $\Q/A$ is isomorphic to $\oplus_{p\in J} \Z_{p^\infty}$ and
$\beta$ sends $A$ to the composite
$$
\Q \rightarrow \Q/A \xrightarrow{\cong} \oplus_{p\in J}
\Z_{p^\infty} \hookrightarrow \Q/\Z
$$
Clearly $\alpha \circ \beta$ sends a torsion free abelian group of
rank one $A$ to a subgroup of $\Q$ which is isomorphic to~$A$.
Moreover, given a homomorphism $\partial: \Q \rightarrow \Q/\Z$,
the image $\beta(\ker \partial)$ coincides with $\partial$ up to an
isomorphism of $\Q$ (corresponding to the choice of an inclusion
$\alpha(\partial) \subset \Q$) and an isomorphism of $\Q/\Z$
(corresponding to the choice of an isomorphism $\Q/\ker \partial
\cong \oplus_{p\in J} \Z_{p^\infty}$). This proves the lemma.
\end{proof}

We proceed now with the construction for any homomorphism $\partial:
\Q \rightarrow \Q/\Z$ of a space $Y(\partial)$ in the extended genus
$\overline{G}(S^n)$
realizing this homomorphism as connecting map in the homotopy long
exact sequence of the fibration $M(\Q/\Z, n-1) \rightarrow
Y(\partial) \rightarrow K(\Q, n)$. There is a bijection $[M(\Q,
n-1), M(\Q/\Z, n-1)] \cong \Hom {\Q} {\Q/\Z}$ and so there exists up
to homotopy a unique map $\Delta: M(\Q, n-1) \rightarrow M(\Q/\Z,
n-1)$ such that $\pi_{n-1} (\Delta) =
\partial$. We define $Y(\partial)$ to be the homotopy cofiber of $\Delta$. We
have therefore a cofibration sequence
$$
M(\Q/\Z, n-1) \rightarrow Y(\partial) \rightarrow M(\Q, n)
$$
which is seen to be a fibration sequence as well, for example by an
elementary Serre spectral sequence argument (a complete characterization of
such sequences has been obtained by Alonso, \cite{MR0977927},
see also Wojtkowiak's \cite{MR0559054}).

\begin{theorem}
\label{thm:extendedgenus}
The extended genus set $\overline{G}(S^n)$ is in bijection with the set of
isomorphism classes of torsion free abelian groups of rank one.
\end{theorem}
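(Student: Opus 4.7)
The plan is to invoke Lemma~\ref{lem:rankone} to reduce the theorem to constructing a bijection $\Q^\times \backslash \Hom{\Q}{\Q/\Z} / \hat{\Z} \xrightarrow{\cong} \overline{G}(S^n)$, and to realise this bijection via the assignment $\partial \mapsto Y(\partial)$ introduced just before the statement. First I would verify $Y(\partial) \in \overline{G}(S^n)$: rationalising the fibration $M(\Q/\Z, n-1) \to Y(\partial) \to M(\Q, n)$ annihilates the torsion fibre, so $Y(\partial)_0 \simeq K(\Q, n) \simeq (S^n)_0$, while by construction the torsion fibre is $M(\Q/\Z, n-1) \simeq S^n_\tau$. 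Descent to the double coset is then routine: a self-equivalence of $M(\Q, n-1)$ corresponds to an element of $\Q^\times$ and one of $M(\Q/\Z, n-1)$ to an element of $\hat{\Z}$, and replacing $\Delta$ by $v \circ \Delta \circ u^{-1}$ does not alter the homotopy type of its cofibre.

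For surjectivity, given any $Y \in \overline{G}(S^n)$ I would read off $\partial \colon \Q \to \Q/\Z$ from the long exact sequence of the fibration $M(\Q/\Z, n-1) \to Y \to K(\Q, n)$. Composing the adjunction unit $M(\Q, n-1) \to \Omega \Sigma M(\Q, n-1) = \Omega M(\Q, n)$ (which is an isomorphism on $\pi_{n-1}$ by Freudenthal) with the connecting map $\Omega M(\Q, n) \to M(\Q/\Z, n-1)$ of the fibration produces a map realising $\partial$ on $\pi_{n-1}$, and this must agree with $\Delta$ up to homotopy by the bijection $[M(\Q, n-1), M(\Q/\Z, n-1)] \cong \Hom{\Q}{\Q/\Z}$. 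Consequently, the composite $M(\Q, n-1) \xrightarrow{\Delta} M(\Q/\Z, n-1) \hookrightarrow Y$ factors through two consecutive terms of the Puppe fibration sequence and is null-homotopic; a choice of null-homotopy produces a map $Y(\partial) = \hocofib(\Delta) \to Y$ fitting into a morphism of fibration sequences which is the identity on both fibre and base (the commutativity of the right-hand square can be arranged using the vanishing of $[M(\Q/\Z, n), M(\Q, n)]$). The five-lemma applied to the long exact sequences of homotopy groups then forces $Y(\partial) \to Y$ to be a weak equivalence.

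For injectivity, an equivalence $Y(\partial) \simeq Y(\partial')$ induces self-equivalences of the rationalisation $K(\Q, n) \simeq Y(\partial)_0$ and of the torsion fibre $M(\Q/\Z, n-1) \simeq Y(\partial)_\tau$ by functoriality; each is detected by its action on the unique non-trivial homotopy group, producing $u \in \Q^\times$ and $v \in \hat{\Z}$. Naturality of the connecting homomorphism then gives $\partial' = v \circ \partial \circ u^{-1}$, placing $\partial$ and $\partial'$ in a common double coset. The principal technical obstacle is the surjectivity step, specifically the identification of the abstract fibration $Y$ with the concrete cofibre $Y(\partial)$; this rests on the rigidity encoded by $[M(\Q, n-1), M(\Q/\Z, n-1)] \cong \Hom{\Q}{\Q/\Z}$, which both pins down $\Delta$ from $\partial$ and underpins the Puppe-sequence comparison.
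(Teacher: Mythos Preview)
Your argument follows the same path as the paper's: construct $Y(\partial)$ as the cofibre of $\Delta$, observe that for any $Y \in \overline{G}(S^n)$ the composite $i \circ \Delta$ is null (both you and the paper identify $\Delta$ with the connecting map of the fibration and use that two consecutive maps in a fibre sequence compose trivially), extend $i$ over the cofibre to obtain $Y(\partial) \to Y$, and conclude this is an equivalence. You are more explicit than the paper about descent to the double coset and about injectivity; the paper concludes instead by saying the map induces equivalences on rationalizations and torsion spaces, which amounts to your five-lemma step.

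One small correction: your parenthetical that the right-hand square can be arranged to commute via the vanishing of $[M(\Q/\Z, n), M(\Q, n)]$ does not do what you need. That vanishing only shows that every map $Y(\partial) \to K(\Q, n)$ factors uniquely through the projection to $M(\Q, n)$, hence equals $\lambda \cdot p_{Y(\partial)}$ for a unique $\lambda \in \Q$; it does not force $\lambda = 1$. The honest fix is to note that naturality of the connecting homomorphism in your map of fibrations gives $\partial = \partial \circ \lambda$, hence $\lambda = 1$ whenever $\partial \neq 0$ (and when $\partial = 0$ the fibration visibly splits), or alternatively to adjust the choice of null-homotopy by an element of $[M(\Q, n), Y] \cong \Q$ to set $\lambda = 1$. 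The paper is equally brief at this point, simply asserting the induced map on rationalizations is an equivalence.
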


\begin{proof}
Given a torsion free abelian group $A$ of rank one we get a
homomorphism $\partial: \Q \rightarrow \Q/\Z$ from
Lemma~\ref{lem:rankone} and construct as above the space
$Y(\partial)$. It realizes $\partial$ as connecting homomorphism in
the homotopy long exact sequence, and its kernel is a group
isomorphic to~$A$. To show that we have indeed a bijection we must
prove that the connecting homomorphism determines the homotopy type
of $Y \in \overline{G}(S^n)$. Let $\partial$ be the connecting
homomorphism for such a space $Y$ and let us compare $Y$ and
$Y(\partial)$. We have a map $i: M(\Q/\Z, n-1) \rightarrow Y$ by
definition of the extended genus set and consider the composite $i
\circ \Delta: M(\Q, n-1) \rightarrow Y$, where $\Delta$ is, as above, the
the unique map,  up to homotopy, realizing $\partial$. It coincides thus, up to
homotopy, with the composite of the Postnikov section $M(\Q, n-1)
\rightarrow K(\Q, n-1)$ and the homotopy fiber inclusion of $K(\Q,
n-1)$ in the total space of the map $M(\Q/\Z, n-1) \rightarrow Y$.
This shows that the composite $i \circ \Delta$ is homotopically
trivial. Therefore the map $i$ factors through the homotopy cofiber
of $\Delta$, i.e. $Y(\partial)$.

We have thus constructed a map $Y(\partial) \rightarrow Y$ which
induces equivalences on rationalizations and torsion spaces. It is
hence an equivalence as well.
\end{proof}

In the final part of the section we restrict our attention to the
$(n-1)$-connected members of the extended genus set of an odd
sphere. We begin with a review of Hilton's investigations of the
extended genus set of $\Z$ and groups of pseudo-integers
\cite{hilton88pseudo,hilton88}.

\begin{definition}\cite{hilton88pseudo}
\label{defn:extendedGZ}
A subgroup of the full rational group $\Q$ is a {\em group of
pseudo-integers\/} if it contains $\Z$ but not $\Z[1/p]$ for any
prime $p$.
\end{definition}

Since a group of pseudo-integers is a torsion free abelian group of
rank one, in the terminology introduced at the beginning of the
section, it is characterized by its type, which consists in only
finite integers $k_p$. This subset of torsion free abelian group of rank one 
has been studied by Hilton in \cite[Theorem 2.3, 2.4]{hilton88pseudo}.

According to \cite[Corollary 2.5]{hilton88pseudo}, the extended genus
set $\overline{G}(\Z)$ of $\Z$, consisting of isomorphism classes of (not
necessarily finitely generated) abelian groups $H$ such that $H
\otimes \Z_{(p)} \cong \Z_{(p)}$ for all primes $p$, is the set of
iso\m\ classes of pseudo-integers.  In other words, $\bar G(\Z)$ is
the set of iso\m\ classes of torsion free abelian groups of rank $1$
of $\infty$-free types \cite[\S 42]{MR41:333}.

\begin{corollary}
\label{cor:hiltongenus}
The set of $(n-1)$-connected spaces in $\overline{G}(S^n)$ is in bijection
with $\overline{G}(\Z)$. The correspondence is given by $Y \mapsto \pi_n(Y)$.
\end{corollary}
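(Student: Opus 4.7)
The plan is to exploit the bijection of Theorem~\ref{thm:extendedgenus} and simply identify which torsion free abelian groups of rank one correspond to $(n-1)$-connected spaces in $\overline{G}(S^n)$. These should turn out to be precisely Hilton's pseudo-integer groups, which by the cited \cite[Corollary 2.5]{hilton88pseudo} constitute $\overline{G}(\Z)$.

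First I would analyze connectivity via the homotopy long exact sequence displayed in the section. Since $M(\Q/\Z,n-1)$ is $(n-2)$-connected and $M(\Q,n)$ is $(n-1)$-connected, any $Y \in \overline{G}(S^n)$ is automatically $(n-2)$-connected, so $Y$ is $(n-1)$-connected if and only if $\pi_{n-1}Y = 0$. The exact sequence
\[
0 \rightarrow \pi_n Y \rightarrow \Q \xrightarrow{\partial} \Q/\Z \rightarrow \pi_{n-1} Y \rightarrow 0
\]
then shows this is equivalent to $\partial$ being surjective. Note that in this case $\pi_n(Y) = \ker\partial$ is a torsion free abelian group of rank one, which is the invariant classifying $Y$ by Theorem~\ref{thm:extendedgenus}.

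Next I would translate surjectivity of $\partial$ into the algebraic condition defining pseudo-integers. Using the factorization $\Q \twoheadrightarrow \Q/\ker\partial \hookrightarrow \Q/\Z$ coming from the map $\beta$ in the proof of Lemma~\ref{lem:rankone}, surjectivity of $\partial$ is equivalent to the inclusion $\Q/A \hookrightarrow \Q/\Z$ being an isomorphism, where $A=\ker\partial$. Since $\Q/A \cong \bigoplus_{p \in J} \Z_{p^\infty}$ with $J = \{p \mid k_p(A) < \infty\}$, this happens precisely when $J$ contains every prime, i.e.\ when the height sequence $\Pi(A)$ has no $\infty$ coordinate. Up to isomorphism, representatives with such a type are exactly subgroups of $\Q$ containing $\Z$ but no $\Z[1/p]$, which is Definition~\ref{defn:extendedGZ}.

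Finally I would assemble the bijection. Given a pseudo-integer group $A$, pick a representative $A\subset\Q$ with $1 \in A$; its image under $\beta$ gives a surjective $\partial: \Q \to \Q/\Z$, and the construction preceding Theorem~\ref{thm:extendedgenus} produces an $(n-1)$-connected $Y(\partial)\in\overline{G}(S^n)$ with $\pi_n Y(\partial) \cong A$. Conversely, for $(n-1)$-connected $Y \in \overline{G}(S^n)$, the second paragraph shows $\pi_n Y \in \overline{G}(\Z)$. Injectivity of $Y \mapsto \pi_n Y$ on $(n-1)$-connected members is immediate from Theorem~\ref{thm:extendedgenus}, since $\pi_n Y = \ker\partial$ and the homotopy type of $Y$ is determined by the isomorphism class of $\ker\partial$. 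The only real point to check carefully is the compatibility between Hilton's invariant (isomorphism class of the abelian group) and the $\Q^\times\backslash\,\cdot\,/\hat\Z$ double coset parametrization of Lemma~\ref{lem:rankone}, but this is just the standard statement that rank one torsion free abelian groups are classified by type; no substantial obstacle is expected.
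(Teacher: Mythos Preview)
Your proposal is correct and follows essentially the same approach as the paper's proof, which is quite terse: it simply states that the $(n-1)$-connected members of $\overline{G}(S^n)$ are characterized by surjectivity of $\partial$, hence by $\ker\partial$ being a group of pseudo-integers, and then invokes Theorem~\ref{thm:extendedgenus}. You have expanded exactly these steps, supplying the long exact sequence argument for the connectivity criterion and the height-sequence analysis linking surjectivity to the pseudo-integer condition.
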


\begin{proof}
The spaces in the extended genus of $S^n$ which are
$(n-1)$-connected are characterized by the fact that the connecting
homomorphism $\partial$ is surjective. In other words its kernel is
a group of pseudo-integers. We conclude by
Theorem~\ref{thm:extendedgenus}.
\end{proof}

\section{A formula to compute localization genera}
\label{sec genus}
In the previous section we have been able to establish a complete
and explicit list of all homotopy types in the extended
rationalization genus of an odd sphere. In general, for arbitrary
spaces and arbitrary localization functor, this is not to be
expected. Following the approach of Dwyer, Kan, and Smith in
\cite{dwks:towers} to classify towers of fibrations, we propose in
this section a formula which we use later on to perform
computations of ``Postnikov'' and ``Neisendorfer'' genus. 
We start with the necessary background from~\cite{dwks:towers}.

Let $G$ be a space and consider the functor $\Phi$ which sends an
object of $Spaces \downarrow B \aut{} {G}$, i.e. a map $t: X
\rightarrow B \aut{} {G}$, to the twisted product $X \times_t G$.
Dwyer, Kan, and Smith describe a right adjoint $\Psi$ in
\cite[Section~4]{dwks:towers}. They find first a model for $\aut{}
{G}$ which is a (simplicial) group and thus acts on the left on
$\map{} {G} Z$ for any space $Z$. This induces a map $r: B \aut{}
{G} \rightarrow B\aut{}{\map{} {G} Z}$. The functor $\Psi$ sends
then $Z$ to the projection map from the twisted product $B \aut{}
{G} \times_r \map{} {G} Z \rightarrow B \aut{} {G}$. This allows
right away to construct a classifying space for towers, in our case
they will be of length $2$.

\begin{theorem}\label{thm:dks}{\rm Dwyer, Kan, Smith,
\cite{dwks:towers}.}
The classifying space for towers of the form $Z \xrightarrow{q} Y
\xrightarrow{p} X$, where the homotopy fiber of $p$ is $G$ and that
of $q$ is $H$, is $B \aut{} {G} \times_r \map{} {G} {B \aut{} {H}}$.
\end{theorem}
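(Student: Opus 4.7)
The plan is to classify the tower one stage at a time, using the adjunction $\Phi \dashv \Psi$ recalled just before the statement. At the first level, a fibration $Y \to X$ with homotopy fiber $G$ is classified, in the usual sense, by its ``holonomy'' map $t \colon X \to B\aut{}{G}$, and the universal example is recovered by $\Phi(t) = X \times_t G \to X$. So the space of first-stage towers of length one with base $X$ is equivalent to $\map{}{X}{B\aut{}{G}}$, and the universal case is $X = B\aut{}{G}$ itself with the identity holonomy.

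For the second stage, once $Y = X \times_t G$ has been produced, a fibration $Z \to Y$ with homotopy fiber $H$ is classified in turn by a map $Y \to B\aut{}{H}$, i.e.\ by an element of $\map{}{\Phi(t)}{B\aut{}{H}}$. Here one invokes the adjunction: the set (or space) of such maps is naturally identified with the space of lifts
\[
\xymatrix{
 & B\aut{}{G} \times_r \map{}{G}{B\aut{}{H}} \ar[d] \\
X \ar[r]_-{t} \ar@{-->}[ur] & B\aut{}{G}
}
\]
that is, with morphisms $t \to \Psi(B\aut{}{H})$ in $Spaces \downarrow B\aut{}{G}$. Combining the two stages, the data of a length-two tower $Z \to Y \to X$ is packaged into a single map from $X$ into the total space $B\aut{}{G} \times_r \map{}{G}{B\aut{}{H}}$, with the composition to $B\aut{}{G}$ recovering~$t$ and hence the first level of the tower.

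To conclude, one observes that forgetting the lift gives back $t$, so the universal tower is obtained by taking $X$ equal to the total space $B\aut{}{G} \times_r \map{}{G}{B\aut{}{H}}$ itself and feeding the identity map into the construction: the projection to $B\aut{}{G}$ provides the universal first stage $Y_{\mathrm{univ}} \to B\aut{}{G} \times_r \map{}{G}{B\aut{}{H}}$ with fiber $G$, and the counit $\Phi \Psi(B\aut{}{H}) \to B\aut{}{H}$ evaluated on this $Y_{\mathrm{univ}}$ provides the universal second stage with fiber $H$.

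The real work, which I would leave to the reference \cite{dwks:towers}, is the verification that $\Phi \dashv \Psi$ is actually a (derived) adjunction between the relevant simplicial model structures, so that the bijection above is an equivalence at the level of classifying spaces rather than mere sets of components; this is precisely the content of Section~4 of \cite{dwks:towers}. Once that machinery is taken as given, the argument above is purely formal: classify one stage by mapping into $B\aut{}{G}$, use the adjunction to translate classification of the next stage into a lifting problem over $B\aut{}{G}$, and iterate (here only once, since towers have length two).
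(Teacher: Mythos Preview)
The paper does not actually prove this theorem: it is stated as a result of Dwyer, Kan, and Smith and cited to \cite{dwks:towers}, with no argument given. Your sketch is a correct outline of the mechanism behind their result---classify the first fibration by a map into $B\aut{}{G}$, then use the adjunction $\Phi \dashv \Psi$ to convert classification of the second fibration into a lifting problem over $B\aut{}{G}$---and you are right to defer the model-categorical verification of the adjunction to Section~4 of \cite{dwks:towers}. In short, you have supplied more than the paper does, and what you supply is accurate.
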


\begin{remark}
\label{rem:simplicial}
Working with the Dwyer-Kan-Smith model means that we deal with simplicial sets. The comparison
with spaces is via the singular complex and geometric realization. Since the realization of a simplicial
set is a CW-complex, all spaces we construct here have obviously the homotopy type of a CW-complex.
\end{remark}

There is an interesting consequence of Theorem~\ref{thm:dks}, which
can be compared with our former result about the classifying space
of the monoid of self-equivalences of a two-stage Postnikov piece,
\cite[Theorem~5.3]{MR2669687}.

\begin{theorem}
\label{thm:fakes}
Let $L$ be a localization functor and $X$ be a space such that $L
\overline{L}X$ is contractible. There is then a bijection between $G_L(X)$
and the set
 \begin{equation*}
   [LX,B\aut{}{\bar LX}]/\Aut{}{LX}
 \end{equation*}
of orbits for the action of the group $\Aut{}{LX}$ on the set
$[LX,B\aut{}{\bar LX}]$.
\end{theorem}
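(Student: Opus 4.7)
The plan is to apply the Dwyer--Kan--Smith classification of fibrations in the single-layer special case of Theorem~\ref{thm:dks}: fibrations over $LX$ with fiber $\bar L X$, up to equivalence over the base, are classified by the set $[LX, B\aut{}{\bar L X}]$ of free homotopy classes (take $X = LX$ and the ``top'' fiber $H$ to be a point, or just use that $B\aut{}{\bar L X}$ is the classifying space of $\bar L X$-fibrations). First I would construct the forward map $G_L(X) \to [LX,B\aut{}{\bar L X}]/\Aut{}{LX}$. Any $Y \in G_L(X)$ fits into the canonical fibration $\bar L Y \to Y \to LY$, and once one fixes equivalences $\bar L Y \simeq \bar L X$ and $LY \simeq LX$ this is classified by a map $\psi_Y : LX \to B\aut{}{\bar L X}$. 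A different choice of fiber identification changes $\psi_Y$ only within its free homotopy class (it acts by an inner self-equivalence of the universal $\bar L X$-bundle, which is invisible on unbased homotopy classes), while a different choice of base identification replaces $\psi_Y$ by $\psi_Y \circ \phi$ for some $\phi \in \Aut{}{LX}$; hence the orbit of $\psi_Y$ is a genuine invariant of $Y$.

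For the inverse direction, given any class $f : LX \to B\aut{}{\bar L X}$ I would form, using the Dwyer--Kan--Smith twisted-product model, the total space $E_f$ of the fibration over $LX$ with fiber $\bar L X$ that $f$ classifies. The crux of the argument, and the point at which the hypothesis $L\bar L X \simeq *$ intervenes, is to verify that $E_f$ belongs to $G_L(X)$. By the $L$-locality of $LX$ the composite $E_f \to LX$ factors through a map $LE_f \to LX$, and because the homotopy fiber $\bar L X$ of $E_f \to LX$ is $L$-acyclic by assumption, the Dror--Farjoun / Bousfield fiber lemma implies that $LE_f \to LX$ is an equivalence. Consequently $\bar L E_f \simeq \bar L X$ and $E_f \in G_L(X)$; finite type of $E_f$ is inherited through the fibration from finite type of $X$.

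The main obstacle I anticipate is verifying that the two constructions descend to mutually inverse bijections on the orbit set. One direction, $Y \mapsto \psi_Y \mapsto E_{\psi_Y}$, follows because $E_{\psi_Y}$ and $Y$ are total spaces of equivalent fibrations over $LX$. The delicate direction, $f \mapsto E_f \mapsto \psi_{E_f}$, requires showing that any two choices of equivalences used to identify the base and the fiber of the canonical fibration $E_f \to LX$ with $LX$ and $\bar L X$ differ precisely by actions of $\Aut{}{LX}$ (on classifying maps, by precomposition) and of $\pi_0 \aut{}{\bar L X}$ (which is absorbed into free homotopy classes into $B\aut{}{\bar L X}$). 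Dually, if $E_f \simeq E_g$ as abstract spaces, then the induced automorphism $\phi$ of $LX \simeq LE_f \simeq LE_g$ exhibits a homotopy $g \simeq f \circ \phi$, so that $[f]$ and $[g]$ lie in the same $\Aut{}{LX}$-orbit. Combining these yields the stated bijection.
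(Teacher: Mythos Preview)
Your proposal is correct and follows essentially the same approach as the paper. The paper invokes the two-stage case of the Dwyer--Kan--Smith theorem (towers $Z \to Y \to *$ with successive fibres $LX$ and $\bar L X$), so that the orbit set $[LX,B\aut{}{\bar LX}]/\Aut{}{LX}$ arises in one stroke as $\pi_0$ of the twisted product $B\aut{}{LX} \times_r \map{}{LX}{B\aut{}{\bar LX}}$, whereas you use the single-stage classification and handle the $\Aut{}{LX}$-quotient by hand; the crucial verification that the constructed total space has $LX$ as its localization is carried out in both cases by the same fibrewise-localization / fibre-lemma argument using the hypothesis $L\bar LX \simeq *$.
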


\begin{proof}
Equivalence classes of towers of fibrations $Z \to Y \to *$ with
successive fibres $LX$ and $\bar L X$ are classified by the set of
path components of  $B=B\aut{}{LX} \times_r \map{}{LX}{B\aut{}{\bar
LX}}$. It is clear that any such space $Z$ fits in a fibration $\bar
LX \rightarrow Z \rightarrow LX$, but we must show that the map $Z
\rightarrow LX$ coincides with the localization map for~$Z$.
Fibrewise localization, \cite[Theorem 1.F.1]{Dror}, yields a natural
transformation between fibration sequences
\[
\xymatrix{
\overline{L} X \rto \dto & Z \rto \dto & LX \ar@{=}[d]\\
L\overline{L} X \rto & \overline{Z} \rto & LX
}\]
By assumption $L\overline{L} X$ is contractible and since
by construction the map $Z \rightarrow \overline{Z}$ is an $L$-equivalence
it must coincide with the localization map $Z \rightarrow LX$.
\end{proof}

The assumption that $L \overline{L}X$ is contractible is 
restrictive if we would require this for all spaces~$X$. This would
amount to imposing that the localization functor $L$ is a so-called
\emph{nullification} functor, i.e. a homotopical localization functor
associated to a map of the form $A \rightarrow *$ such as a Postnikov
section -- when $A$ is a sphere, \cite[1.A.6]{Dror} -- or Quillen's plus 
construction. We impose this condition however on a single space,
and this happens sometimes for localization functors that are not
nullifications. When $X$ is a simply connected
space of finite type and $L$ is rationalization, $p$-completion, or
completion, then $L \overline{L} X \simeq *$. 
These are the main examples of interest in this note.

According to Theorem~\ref{thm:fakes}, one can often compute a
localization genus set via a Dwyer-Kan-Smith type formula.

\begin{corollary}
\label{cor:classicalDKS}
The classical extended genus set of a simply connected space $X$ of finite
type is given by $\overline{G}(X) = [X_0, B\aut{}{X_\tau}]/\Aut{}{X_0}$.
\end{corollary}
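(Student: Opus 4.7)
The plan is a direct application of Theorem~\ref{thm:fakes} to the rationalization functor $L = (-)_0$. Combined with Proposition~\ref{prop:classicalG}, which identifies $\overline{G}(X)$ with $\overline{G}_0(X)$ and lets us read $\bar L X$ as $X_\tau$, the only step is to verify the hypothesis $L\bar L X \simeq *$ of Theorem~\ref{thm:fakes}.

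So first I would check that $(X_\tau)_0$ is contractible. Applying the homotopy long exact sequence of the rationalization fibration $X_\tau \to X \to X_0$ and using that $\pi_n(X_0) = \pi_n(X) \otimes \Q$, one sees that each homotopy group $\pi_n(X_\tau)$ with $n \geq 2$ fits in a short exact sequence
\[
0 \to \coker\bigl(\pi_{n+1}(X) \to \pi_{n+1}(X) \otimes \Q\bigr) \to \pi_n(X_\tau) \to \ker\bigl(\pi_n(X) \to \pi_n(X) \otimes \Q\bigr) \to 0,
\]
and is therefore a torsion abelian group. A similar computation shows that $\pi_1(X_\tau)$ is simultaneously torsion and a quotient of the $\Q$-vector space $\pi_2(X_0)$, hence zero. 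Thus $X_\tau$ is simply connected with only torsion higher homotopy groups, so rationalizing kills it and $(X_\tau)_0 \simeq *$.

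With the hypothesis satisfied, Theorem~\ref{thm:fakes} applied to $L = (-)_0$ immediately produces the asserted bijection $\overline{G}(X) = [X_0, B\aut{}{X_\tau}]/\Aut{}{X_0}$. There is no real obstacle beyond a mild bookkeeping point worth flagging: Theorem~\ref{thm:fakes} is phrased for the finite-type genus $G_L(X)$, while the corollary is stated for the extended genus $\overline{G}(X)$. The underlying Dwyer--Kan--Smith classification of towers, however, parameterises \emph{all} CW homotopy types fitting in the appropriate tower of fibrations with successive fibres $X_\tau$ and $X_0$, via the geometric realisation of the twisted product (cf.\ Remark~\ref{rem:simplicial}); so the proof of Theorem~\ref{thm:fakes} gives the formula for $\overline{G}_0(X)$ verbatim, which is what the corollary claims.
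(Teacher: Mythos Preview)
Your approach is exactly the paper's: the corollary is stated as an immediate consequence of Theorem~\ref{thm:fakes} applied to rationalization, the hypothesis $(X_\tau)_0 \simeq *$ having been asserted in the paragraph just before the corollary. Your remark that the proof of Theorem~\ref{thm:fakes} really classifies all towers, hence yields $\overline{G}_L$ rather than merely $G_L$, is a fair observation and is indeed what the paper silently uses here.

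One small slip in your verification of the hypothesis: $\pi_1(X_\tau)$ need not vanish. The exact sequence gives $\pi_1(X_\tau) \cong \coker\bigl(\pi_2 X \to \pi_2 X \otimes \Q\bigr)$, which is $(\Q/\Z)^r$ when $\pi_2 X$ has rank~$r$; being torsion and a quotient of a $\Q$-vector space does not force a group to be zero ($\Q/\Z$ itself is a counterexample). This does not harm the conclusion: $X_\tau$ is nilpotent (fibre of a map between simply connected spaces), its fundamental group is abelian and torsion, and all higher homotopy groups are torsion, so $(X_\tau)_0$ is still contractible.
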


\section{Neisendorfer genus and Postnikov genus}
\label{sec:joe}
Let $P$ be the nullification functor \cite[1.A.4]{Dror} with respect
to the wedge $\bigvee\! B\Z/p$ taken over all primes $p$. The unexpected
effect of $P$ on highly connected covers of finite spaces has been first
studied by Neisendorfer, \cite{MR96a:55019}. It relies on Miller's solution
to the Sullivan conjecture, \cite{Miller}. 
The name \emph{Neisendorfer's functor} is commonly used for $P_{B\mathbb Z/p}$ 
followed by completion at the prime $p$. This composition of two localization functors
however is not itself a localization functor because it fails to be idempotent in general.
Therefore we use here the name Neisendorfer's functor for localization with respect
to the maps $c\colon \bigvee\! B\Z/p \rightarrow *$ and a wedge of universal mod $p$ 
homology equivalences~$h$. Thus $L_h$ coincides with profinite completion on nilpotent spaces and
$\hat P X= L_{c \vee h} X$ often agrees with $(PX)^\wedge$, e.g. when the latter space is already $B\Z/p$-null.  

\begin{theorem}\cite[Theorem 4.1]{MR96a:55019}
\label{thm:joe}
Let $X$ be a simply connected finite complex with $\pi_2(X)$
finite and let $n \geq 1$ be a natural number. Then
\begin{equation*}
 P(\ncov Xn) = \holim \left( X \to X_0 \leftarrow \ncov Xn_0 \right)
\end{equation*}
so that $P(\ncov Xn)_\tau \simeq X_\tau$ and $P(\ncov Xn)^\wedge \simeq
X^\wedge$.
\end{theorem}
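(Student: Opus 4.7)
My plan is to identify the homotopy pullback $Y := \holim(X \to X_0 \leftarrow \ncov Xn_0)$ as the $P$-nullification of $\ncov Xn$ by verifying the two defining properties: $Y$ is $P$-null, and the natural comparison map $\phi : \ncov Xn \to Y$ is a $P$-equivalence. Once this is done, the torsion-space and completion formulas will follow directly from the geometry of the pullback square.

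First I would construct $\phi$ from the universal property of the homotopy pullback: the cover map $c : \ncov Xn \to X$ and the rationalization $\rho : \ncov Xn \to \ncov Xn_0$ admit a canonical common composite (the rationalization of $c$) to $X_0$. Next I would verify that $Y$ is $\bz$-null for every prime $p$. Since $X$ is a simply connected finite complex, Miller's solution of the Sullivan conjecture gives $\operatorname{map}_*(\bz, X) \simeq *$. The spaces $X_0$ and $\ncov Xn_0$ are rational and hence $\bz$-null, because $\tilde{H}^*(\bz; \Q) = 0$ implies that $\operatorname{map}_*(\bz, K(\Q, m))$ is contractible and rational spaces are assembled from such layers. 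Since $\operatorname{map}_*(\bz, -)$ is a right adjoint and therefore commutes with homotopy limits, the pullback $Y$ inherits $\bz$-nullity for every $p$, and so is $P$-null.

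The heart of the argument is to show that $\phi$ is a $P$-equivalence. A Mather cube applied to the commutative square of cover maps and rationalizations identifies the homotopy fiber of $\phi$ with the torsion space $(\Omega X[n])_\tau = \hofib(\Omega X[n] \to (\Omega X[n])_0)$. The hypothesis that $\pi_2(X)$ is finite, combined with $X$ being finite, ensures that $X[n]$ and hence $\Omega X[n]$ are simply connected of finite type; the torsion space then admits a Postnikov tower whose layers $K(G,m)$ have $G$ either a torsion abelian group or a divisible torsion group of the form $(\pi \otimes \Q)/\pi$ for a finitely generated $\pi$. Each such Eilenberg--MacLane space is $\bigvee \bz$-cellular, and so is $(\Omega X[n])_\tau$. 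Applying the fibration preservation results of Dror Farjoun and Chach\'olski, a fibration whose fiber is $\bigvee \bz$-cellular and whose base is $P$-null is a $P$-equivalence; hence $\phi$ is a $P$-equivalence and $P(\ncov Xn) \simeq Y$.

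Finally, $Y_0$ is the homotopy pullback of the rational diagram $X_0 \to X_0 \leftarrow \ncov Xn_0$, which collapses to $\ncov Xn_0$; thus $Y_\tau = \hofib(Y \to \ncov Xn_0) \simeq \hofib(X \to X_0) = X_\tau$. Dually, the homotopy fiber of $Y \to X$ is $\hofib(\ncov Xn_0 \to X_0) = (\Omega X[n])_0$, a rational nilpotent space, which $p$-completion annihilates for each prime $p$, so $Y^\wedge \simeq X^\wedge$. The principal obstacle is the cellularity step: one must verify carefully that the divisible torsion layers appearing in the Postnikov analysis of $(\Omega X[n])_\tau$ yield $\bigvee \bz$-cellular Eilenberg--MacLane spaces, and then invoke exactly the right form of the preservation theorem for fibrations with cellular fiber over a null base.
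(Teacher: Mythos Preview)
The paper does not prove this statement: it is quoted verbatim as Neisendorfer's theorem \cite[Theorem~4.1]{MR96a:55019} and used as a black box in the subsequent corollaries. There is therefore no ``paper's own proof'' to compare against; your proposal is an attempt to reconstruct Neisendorfer's argument directly.

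Your overall strategy is the right one and essentially matches the standard proof: identify the homotopy pullback $Y$, show it is $P$-null via Miller's theorem and the $\bz$-nullity of rational spaces, identify the fibre of $\phi$ as $(\Omega X[n])_\tau$, and argue that this fibre is killed by $P$ so that $\phi$ is a $P$-equivalence. Two points deserve tightening. First, your assertion that $\Omega X[n]$ is \emph{simply connected} is not correct: for $n\geq 2$ one has $\pi_1(\Omega X[n])=\pi_2(X)$, which is only assumed finite, not trivial. What you actually need (and have) is that $\Omega X[n]$ is a connected \emph{simple} space of finite type, so that its rationalization and torsion space are well defined and the Postnikov analysis goes through. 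Second, you frame the key step in terms of $\bigvee_p\bz$-\emph{cellularity} of the torsion layers, and then flag this as ``the principal obstacle''. In fact cellularity is stronger than what is required and is not obviously true for, say, $K(\Z/p^k,1)$. The correct and easier statement is that each Postnikov layer of $(\Omega X[n])_\tau$ is $P$-\emph{acyclic} (i.e.\ $P$ of it is contractible), which follows since torsion abelian groups split into $p$-primary pieces and each $K(G_p,m)$ with $G_p$ a $p$-torsion group is $P_{\bz}$-acyclic. Then Dror Farjoun's fibration lemma \cite[1.H.1]{Dror} (if $P(F)\simeq *$ then $P(E)\simeq P(B)$) gives the $P$-equivalence directly, without any cellularity hypothesis. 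With these two corrections your argument is complete.
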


As a direct consequence we compute the Neisendorfer genus of a finite complex. In a given localization genus set we fix
the localization and the fiber of the localization map. Thus, even if all highly connected covers of a finite complex have the
same Neisendorfer localization as the original complex, they do not belong to the (extended) genus set since the fibers
of the localization maps fail to agree.

\begin{corollary}
\label{cor:joegenus}
Let $X$ be a simply connected finite complex with $\pi_2 X$ finite.
Then $\overline{G}_P(X) = G_P(X)  =  \{ X \}$.
\end{corollary}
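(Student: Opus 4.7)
My plan is to reduce the corollary to the single observation that $PX \simeq X$ under the given hypotheses, after which the extended genus collapses by purely formal considerations. Concretely, once $\overline{P}X \simeq *$ is in hand, the defining fibration $\overline{P}Y \to Y \to PY$ for any $Y \in \overline{G}_P(X)$ delivers the conclusion immediately.

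First I would establish $PX \simeq X$. The direct way is to invoke Miller's resolution of the Sullivan conjecture~\cite{Miller}: a simply connected finite CW-complex $X$ has $\operatorname{map}_*(B\Z/p,X) \simeq *$ for every prime $p$, so $\operatorname{map}_*(\bigvee B\Z/p, X) \simeq *$ as well, whence $X$ is $\bigvee B\Z/p$-null and the coaugmentation $X \to PX$ is a weak equivalence. A more internal route, using tools already in the paper, is to apply Theorem~\ref{thm:joe} at $n=1$: since $X$ is simply connected one has $\ncov{X}{1} = X$, so the map $\ncov{X}{1}_0 \to X_0$ is the identity and the homotopy pullback $\holim(X \to X_0 \leftarrow X_0)$ collapses to $X$, giving $PX = X$. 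In either case $\overline{P}X \simeq *$.

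Next I would feed this into the definition of the extended genus. Given $Y \in \overline{G}_P(X)$, we have $\overline{P}Y \simeq \overline{P}X \simeq *$, so the fibration $\overline{P}Y \to Y \to PY$ forces the coaugmentation $Y \to PY$ to be a weak equivalence. Combined with $PY \simeq PX \simeq X$, this yields $Y \simeq X$. Since $X$ is finite, $Y$ automatically has finite CW-type, and we conclude $\overline{G}_P(X) = G_P(X) = \{X\}$.

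I do not anticipate any real obstacle. The only genuine input is Miller's theorem (or equivalently Theorem~\ref{thm:joe} at $n=1$); the remainder is bookkeeping. The finiteness hypothesis on $\pi_2 X$ is inherited from Theorem~\ref{thm:joe} and is not logically needed if one is content to quote Miller directly, but keeping it makes the derivation fully internal to the paper.
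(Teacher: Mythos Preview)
Your proof is correct and follows the same line as the paper's: both reduce to the single observation that $PX \simeq X$, obtained by applying Theorem~\ref{thm:joe} with $n=1$. The paper leaves the formal deduction of $\overline{G}_P(X)=\{X\}$ from $\overline{P}X \simeq *$ implicit, whereas you spell it out, but the argument is the same.
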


\begin{proof}
Applying Theorem~\ref{thm:joe} to $X$ with $n=1$ we see that $PX \simeq X$. 
\end{proof}

For us the following consequences will be important. In particular
point (3) will help compute the monoids of self-equivalences which
appear in the formula from Corollary~\ref{cor:classicalDKS}.

\begin{corollary}
\label{cor:joe}
Let $X$ be a simply connected finite complex with $\pi_2(X)$ finite.
Suppose that $\pi_{>N}(X) \otimes \Q = 0$ for some integer $N$. Then
 \begin{enumerate}
 \item $P(\ncov XN) \simeq X_\tau$
 \item \label{cor:joe2} \cite{MR1444714} There are weak
   equivalences
\begin{equation*}
 \aut *{\ncov XN} \simeq \aut *{X_\tau} \simeq \aut *{X^\wedge} \simeq  \aut *{\ncov {X^\wedge}N}
\end{equation*}
of topological monoids of pointed self homotopy equivalences
\item The obvious map $G(X) \to G(X[N])$ is bijective
 \end{enumerate}
\end{corollary}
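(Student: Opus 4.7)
For part~(1), I would invoke Theorem~\ref{thm:joe} directly. The hypothesis $\pi_{>N}(X)\otimes\Q=0$ forces every homotopy group of the $N$-connected cover $\ncov XN$ to be torsion, hence rationally trivial, so $(\ncov XN)_0\simeq *$. Substituting into the formula from Theorem~\ref{thm:joe} collapses the limit to
\begin{equation*}
P(\ncov XN)\simeq \holim\bigl(X\to X_0\leftarrow *\bigr)\simeq \hofib(X\to X_0)=X_\tau.
\end{equation*}

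For part~(2), part~(1) identifies $X_\tau$ with the $P$-nullification of $\ncov XN$. Two equivalences combine to give $\aut*{\ncov XN}\simeq \aut*{X_\tau}$: the universal property of $P$-nullification together with $P$-locality of $X_\tau$ yields the precomposition equivalence $\map{*}{X_\tau}{X_\tau}\simeq \map{*}{\ncov XN}{X_\tau}$; and since $\ncov XN$ is $N$-connected while $X_\tau$ shares its (torsion) homotopy groups above degree $N$, the $N$-connected cover of $X_\tau$ is $\ncov XN$ itself, so postcomposition gives $\map{*}{\ncov XN}{\ncov XN}\simeq \map{*}{\ncov XN}{X_\tau}$. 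Restricting to invertible path components yields the first weak equivalence. The equivalence $\aut*{X_\tau}\simeq \aut*{X^\wedge}$ comes from the arithmetic-square identification of $X_\tau$ as the fibre of $X^\wedge\to (X^\wedge)_0$ recalled in Proposition~\ref{prop:classicalG}; the final equivalence follows by rerunning the nullification argument with $X^\wedge$ in place of $X$. The full chain is recorded in~\cite{MR1444714}.

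For part~(3), I would rewrite both genus sets using Corollary~\ref{cor:classicalDKS}:
\begin{equation*}
\overline{G}(X)=[X_0,B\aut{}{X_\tau}]/\Aut{}{X_0}, \qquad \overline{G}(X[N])=[X[N]_0,B\aut{}{X[N]_\tau}]/\Aut{}{X[N]_0}.
\end{equation*}
The hypothesis gives $X_0\simeq X[N]_0$, so the index sets and group actions agree. A short long-exact-sequence computation shows $X[N]_\tau\simeq X_\tau[N]$, and the Postnikov map $X_\tau\to X_\tau[N]$ induces $B\aut{}{X_\tau}\to B\aut{}{X_\tau[N]}$. Its homotopy fibre is controlled by $B\aut{}{\ncov XN}$ via the Dwyer--Kan--Smith tower of Theorem~\ref{thm:dks}; by the unpointed analogue of part~(2) this is equivalent to $B\aut{}{X^\wedge}$, a space whose relevant homotopy is profinite. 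Since $X_0$ is rational with homotopy concentrated in degrees $\leq N$, the mapping set $[X_0,B\aut{}{\ncov XN}]$ is trivial, so the induced map on $[X_0,-]/\Aut{}{X_0}$ is a bijection. Finite type is preserved throughout, upgrading the bijection of extended genus sets to $G(X)\cong G(X[N])$.

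The main obstacle is the obstruction-theoretic step in part~(3): comparing $B\aut{}{X_\tau}$ with $B\aut{}{X_\tau[N]}$ after taking homotopy classes of maps from $X_0$ requires a careful analysis of how the rational space $X_0$ sees the classifying spaces involved. The crucial input is part~(2), which replaces $\aut{}{\ncov XN}$ by the more tractable $\aut{}{X^\wedge}$, allowing one to exploit the rational-versus-profinite dichotomy that makes the fibre of Postnikov truncation invisible from $X_0$.
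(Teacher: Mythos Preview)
Your arguments for parts (1) and (2) are sound. For (1) you do exactly what the paper does. For (2) the paper takes a slightly different route---it observes that the three functors $P$, completion, and $N$-connected cover (the last viewed as $S^{N+1}$-cellularization) send $\ncov XN$, $X_\tau$, $X^\wedge$ cyclically to one another, and since each is a pointed homotopy functor each induces a weak equivalence on $\aut_*$---but your mapping-space argument via the universal property of nullification is equally valid.

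Part (3) is where there is a real gap, and you have correctly identified its location. The claim that the homotopy fibre of $B\aut{}{X_\tau}\to B\aut{}{X_\tau[N]}$ ``is controlled by $B\aut{}{\ncov XN}$ via the Dwyer--Kan--Smith tower'' is not what Theorem~\ref{thm:dks} says: that theorem gives a classifying space for \emph{towers}, namely a twisted product $B\aut{}{G}\times_r\map{}{G}{B\aut{}{H}}$, and does not produce a fibration of the form $B\aut{}{F}\to B\aut{}{E}\to B\aut{}{B}$ for a fibration $F\to E\to B$. In general there is no such fibration; the relationship among these three classifying spaces is considerably more delicate. Even granting some fibre controlled by $\aut{}{\ncov XN}$, the step ``$[X_0,B\aut{}{X^\wedge}]$ is trivial because $X_0$ is rational and the target is profinite'' is not justified: $B\aut{}{X^\wedge}$ is not itself a profinite space, and its low homotopy groups (in particular $\pi_1=\Aut{}{X^\wedge}$) need not vanish rationally.

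The paper avoids this obstruction-theoretic morass entirely by using Wilkerson's double coset formula rather than Corollary~\ref{cor:classicalDKS}. Wilkerson expresses $G(X)$ as a double coset of the $\pi_*$-continuous self-equivalences of $(X^\wedge)_0$ under the actions of $\Aut{}{X^\wedge}$ and $\Aut{}{X_0}$. One then checks that each of these three ingredients is literally unchanged when $X$ is replaced by $X[N]$: the rational hypothesis gives $X_0\simeq X[N]_0$ and $(X^\wedge)_0\simeq (X[N]^\wedge)_0$, and part~(2) is invoked to identify the self-equivalence groups of the completions. This reduces the comparison to algebra and bypasses any analysis of the map $B\aut{}{X_\tau}\to B\aut{}{X_\tau[N]}$.
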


\begin{proof}
Since $\ncov XN_0$ is contractible we deduce from
Theorem~\ref{thm:joe} that $P(\ncov XN) = X_\tau$, which proves~(1).
Hence starting with $\ncov XN$ we see that we get first $X_\tau$ by
applying $P$, then $X^\wedge$ by applying completion, and finally
$\ncov XN$ by taking the $N$-connected cover. As this last functor
is a pointed one (it can be seen for example as cellularization with respect to the
sphere $S^{N+1}$, \cite[Example 2.D.6]{Dror}), we obtain a chain of weak homotopy equivalences
$\aut *{\ncov XN} \simeq \aut *{X_\tau} \simeq \aut *{X^\wedge}
\simeq  \aut *{\ncov {X^\wedge}N}$. This shows (2).

Wilkerson's double coset formula for the genus set \cite[Theorem~3.8]{MR0402737} 
exhibits $G(X)$ as double coset of the so-called $\pi_*$-continuous self-equivalences of $(X^\wedge)_0$
under the left action of $\aut{}{X^\wedge}$ and the right action of $\aut{}{X_0}$.
Clearly $X$ and  the Postnikov section $X[N]$ are rationally identical by assumption. Also the groups of self-equivalences 
are the same for $X$ and its completion by part (2) of this corollary (which is for the pointed version but the spaces here 
are simply connected). Thus all ingredients in Wilkerson's formula are identical for $X$ and $X[N]$, which
shows that the map $G(X) \to G(X[N])$ is bijective.
\end{proof}

We note that $G(X[N])$ can be computed from Zabrodsky's exact
sequence when $X_0$ is an $H$-space \cite{alex74H,alex74},
\cite[Theorem 4]{chuck94}.

We turn now to our first computation of Postnikov genus. If $f:S^{N+1} \rightarrow *$ is the constant map,
then  $L_f$ is a functorial $N$-th Postnikov section, $L_f X \simeq X[N]$, and $\overline{L}_f$ is a functorial $N$-connected cover,
$\bar L_f X \simeq X \langle N \rangle$.

\begin{definition}
\label{def:Postnikovgenus}
The \emph{$N$-th Postnikov section genus} of a space $X$ is the set $G_L(X)$ when $L$ is localization with respect
to $f:S^{N+1} \rightarrow *$. We write $G_{[N]}(X)$ for this set. 
\end{definition}

Hence a space $Y$ belongs to the extended genus $\overline{G}_{[N]}(X)$ if  its $N$-th Postnikov section $Y[N]$ 
coincides with $X[N]$ and its $N$-connected cover $Y\langle N \rangle$ coincides with  $X\langle N \rangle$.

\begin{theorem}
\label{thm:Postnikovgenus}
Let $X$ be a simply connected finite complex with $\pi_2(X)$ finite and
suppose that $\pi_{>N}(X) \otimes \Q = 0$ for some integer $N$. Then the 
only finite CW-complex in $G_{[N]}(X)$ is $X$
\end{theorem}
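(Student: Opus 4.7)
The strategy is to transfer the problem from the Postnikov genus $G_{[N]}(X)$ to the classical Mislin genus $G(X)$, and then close with the bijection $G(X) \to G(X[N])$ furnished by Corollary~\ref{cor:joe}(3). Let $Y$ be a finite CW-complex in $G_{[N]}(X)$. First I would check that $Y$ satisfies all the standing hypotheses of Section~\ref{sec:joe}: the equivalence $Y[N] \simeq X[N]$ shows that $Y$ is simply connected and, for $N \geq 2$, that $\pi_2 Y = \pi_2 X$ is finite (the case $N=1$ is trivial because then $Y \simeq Y\langle 1\rangle \simeq X\langle 1\rangle \simeq X$), while the equivalence $Y\langle N\rangle \simeq X\langle N\rangle$ forces $\pi_{>N}(Y)\otimes\Q = 0$.

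Next I would verify $Y_0 \simeq X_0$ and $Y_\tau \simeq X_\tau$ separately. Rationalizing the Postnikov fibration $Y\langle N\rangle \to Y \to Y[N]$, the fiber is contractible since $\pi_{>N}(Y)\otimes\Q = 0$, so
\begin{equation*}
Y_0 \simeq (Y[N])_0 \simeq (X[N])_0 \simeq X_0.
\end{equation*}
For the torsion space, Corollary~\ref{cor:joe}(1) applied to both $Y$ and $X$ gives $Y_\tau \simeq P(Y\langle N\rangle)$ and $X_\tau \simeq P(X\langle N\rangle)$. Since $Y\langle N\rangle \simeq X\langle N\rangle$, these agree, so $Y_\tau \simeq X_\tau$. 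By Proposition~\ref{prop:classicalG} this upgrades to $Y^\wedge \simeq X^\wedge$. Thus $Y \in \overline{G}(X)$, and because $Y$ is finite, $Y \in G(X)$ by the very definition of the Mislin genus set.

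To conclude, the bijection $G(X) \to G(X[N])$, $[Z]\mapsto[Z[N]]$, from Corollary~\ref{cor:joe}(3) sends both $[Y]$ and $[X]$ to the class $[X[N]]$ (because $Y[N]\simeq X[N]$), so injectivity forces $Y \simeq X$. The main obstacle in this plan is conceptual rather than computational: one has to be careful that the finite CW hypothesis on $Y$ is precisely what is needed to push $Y$ from the (potentially huge) extended genus $\overline{G}(X)$ into the finite-type genus $G(X)$, where Wilkerson's double-coset formula—the engine of Corollary~\ref{cor:joe}(3)—applies.
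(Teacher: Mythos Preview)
Your proof is correct and follows essentially the same route as the paper: reduce to the classical genus via Neisendorfer's functor applied to the connected cover, then invoke the injectivity of $G(X)\to G(X[N])$ from Corollary~\ref{cor:joe}(3). The only differences are cosmetic---you go through $Y_\tau$ via Corollary~\ref{cor:joe}(1) while the paper cites Theorem~\ref{thm:joe} directly to get $Y^\wedge$, and you are more explicit than the paper in verifying that $Y$ itself satisfies the hypotheses (simply connected, $\pi_2$ finite, rationally truncated above $N$) needed to apply those results.
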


\begin{proof}
If $Y$ belongs to  $G_{[N]}(X)$, then $\ncov YN \simeq \ncov XN$ and $Y[N] \simeq
X[N]$. Thus, if $Y$ is finite, $Y^\wedge \simeq P(\ncov YN)^\wedge
\simeq P(\ncov XN)^\wedge \simeq X^\wedge$ by Theorem~\ref{thm:joe}
and $Y_0 \simeq Y[N]_0 \simeq  X[N]_0 \simeq X_0$ showing that $Y
\in G(X)$. But $X$ and $Y$ have the same image under the injective
map $G(X) \to G(X[N])$ (by the previous theorem) so $X\simeq Y$.
\end{proof}

\begin{remark}
\label{rem:infiniteCW}
Point (4) above tells us that there are very few finite CW-complexes
in a Postnikov genus set. However there are many infinite
CW-complexes of finite type in the extended genus set. For example when $X = S^3$ and $L$ is
the third Postnikov section, the space $K(\Z, 3) \times S^3\langle 3
\rangle$ is obviously in the $L$-genus of $S^3$. We will come back
to this kind of example with a detailed computation in the next
section.
\end{remark}

\begin{remark}
\label{rem:split}
It is not always true that there is a single finite complex in the
Postnikov genus of a finite complex $X$. Let us consider for example
the space $S^2 \times S^5$ and the functor $L$ is chosen to be the
second Postnikov section. Then $(S^2 \times S^5)[2] \simeq K(\Z, 2)$
and $(S^2 \times S^5)\langle 2 \rangle \simeq S^3 \times S^5$. It is
easy to see that the space $\C P^2 \times S^3$ also belongs to
$G_{[2]}(S^2 \times S^5)$. Of course the condition of the corollary are
not fulfilled since neither $\pi_2 S^2$, nor $\pi_3 S^2$, are torsion.

It would be interesting to construct similar examples with higher Postnikov sections
and at least $2$-connected spaces, so the $\pi_2$ assumption is trivially fulfilled.
\end{remark}

\section{Self-equivalences of connected covers of a sphere}
\label{sec selfequivalences}
Our next goal will be to determine the $n$-th Postnikov genus of an odd sphere $S^n$ with $n \geq 3$.
This will be done by using Theorem~\ref{thm:fakes}, which involves the
computation of the space of self-equivalences of the $n$-th connected cover $S^n \langle n \rangle$.
This section prepares the terrain for the genus computation in the next section and
focuses on handy properties of $\aut{}{S^n \langle n \rangle^\wedge_p}$.

We write $X^\wedge_p$ for the $p$-completion of $X$. Since $S^n
\langle n \rangle$ is a torsion space, it is weakly equivalent to
the product of its $p$-completions $S^n \langle n \rangle^\wedge_p$.
Now
\[
\map{}{S^n \langle n \rangle}{S^n \langle n \rangle} \simeq \prod_p
\map{}{S^n \langle n \rangle}{S^n \langle n \rangle^\wedge_p} \simeq
\prod_p \map{}{S^n \langle n \rangle^\wedge_p \times \prod_{q \neq
p} S^n \langle n \rangle^\wedge_q}{S^n \langle n \rangle^\wedge_p}
\]
But since $S^n \langle n \rangle^\wedge_p$ is $p$-complete and
$(\prod_{q \neq p} S^n \langle n \rangle^\wedge_q)^\wedge_p$ is
contractible, we see that this mapping space is weakly equivalent to
$\prod_p \map{}{S^n \langle n \rangle^\wedge_p}{S^n \langle n
\rangle^\wedge_p}$. Therefore the subspace of self-equivalences also
splits as a product
\[
\aut{}{S^n \langle n \rangle} \simeq \prod_p \aut{}{S^n \langle n
\rangle^\wedge_p}.
\]
Because of the formulas in Theorem~\ref{thm:fakes}
we wish to understand certain mapping spaces into $B \aut{}{S^n
\langle n \rangle^\wedge_p}$ and start with a few elementary lemmas.

\begin{lemma}
\label{lem:bzplocal}
The space $B\aut{*}{(S^n)^\wedge_p} \simeq B\aut{*}{S^n \langle n \rangle^\wedge_p}$ is $\Sigma
B\Z/p^k$-local for any $k \geq 1$.
\end{lemma}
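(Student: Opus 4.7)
The plan is to handle the two claims separately. For the equivalence $B\aut{*}{(S^n)^\wedge_p} \simeq B\aut{*}{S^n\langle n\rangle^\wedge_p}$, I would invoke Corollary~\ref{cor:joe}(2) with $X = S^n$ (odd, $n \geq 3$, so $\pi_2 S^n = 0$ and $\pi_{>n}(S^n) \otimes \Q = 0$) and $N = n$, which yields the global equivalence $\aut{*}{S^n\langle n\rangle} \simeq \aut{*}{(S^n)^\wedge}$. Both monoids factor prime-wise as in the opening discussion of this section: the source because $S^n\langle n\rangle$ is a torsion space, the target because $(S^n)^\wedge = \prod_p (S^n)^\wedge_p$ with each factor $p$-complete and hence invisible to the other factors after mapping into $(S^n)^\wedge_p$. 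The equivalence from Corollary~\ref{cor:joe}(2) respects these decompositions since it is assembled from functors ($\bar L$, $P$, and completion) that each act factor-wise, and extracting the $p$-th factor gives the statement.

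For the $\Sigma B\Z/p^k$-nullity, my plan is a chain of reductions. By the loop-suspension adjunction, $\mapp{}{\Sigma B\Z/p^k}{BM} \simeq \mapp{}{B\Z/p^k}{\Omega BM}$, and for a grouplike monoid $M$ one has $\Omega BM \simeq M$; hence $BM$ is $\Sigma B\Z/p^k$-null if and only if $M$ is $B\Z/p^k$-null. This reduces the problem to showing $\aut{*}{(S^n)^\wedge_p}$ is $B\Z/p^k$-null. Since this is a union of connected components of the full mapping space $\mapp{}{(S^n)^\wedge_p}{(S^n)^\wedge_p}$ and nullity is a component-wise property, it suffices that the full mapping space be $B\Z/p^k$-null. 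The internal hom adjunction $\mapp{}{W}{\mapp{}{X}{Y}} \simeq \mapp{}{W \wedge X}{Y}$ implies that if $Y$ is $W$-null then so is $\mapp{}{X}{Y}$; applied with $W = B\Z/p^k$ and $Y = (S^n)^\wedge_p$, this reduces matters to showing that $(S^n)^\wedge_p$ itself is $B\Z/p^k$-null.

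For this last step, Corollary~\ref{cor:joegenus} applied to the simply connected finite complex $S^n$ (with $\pi_2 = 0$) gives $PS^n \simeq S^n$, so $S^n$ is $B\Z/p$-null for every prime $p$. The short exact sequence $0 \to \Z/p^{k-1} \to \Z/p^k \to \Z/p \to 0$ produces a fibration $B\Z/p^{k-1} \to B\Z/p^k \to B\Z/p$, and an induction on $k$ together with the standard fact that null classes are closed under such extensions propagates nullity from $B\Z/p$ to every $B\Z/p^k$. The main obstacle I anticipate is the final passage from $S^n$ being $B\Z/p^k$-null to $(S^n)^\wedge_p$ being $B\Z/p^k$-null: since $B\Z/p^k$ is itself $p$-complete, the Bousfield--Kan formula for the $p$-completion of a mapping space with nilpotent target furnishes $\mapp{}{B\Z/p^k}{(S^n)^\wedge_p} \simeq \mapp{}{B\Z/p^k}{S^n}^\wedge_p$, which is the $p$-completion of a contractible space and hence contractible, as required. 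Alternatively, one could appeal directly to the $p$-complete form of Miller's theorem.
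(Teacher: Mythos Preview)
Your argument is correct and follows essentially the same route as the paper: invoke Corollary~\ref{cor:joe}(2) for the equivalence of monoids, use the loop--suspension adjunction to reduce $\Sigma B\Z/p^k$-nullity of the classifying space to $B\Z/p^k$-nullity of $\aut{*}{(S^n)^\wedge_p}$, and finish via Miller's theorem. Two remarks on where you and the paper diverge.

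First, the paper's reduction is slightly sharper than yours. Rather than the internal-hom adjunction, the paper observes directly that $\mapp{}{(S^n)^\wedge_p}{(S^n)^\wedge_p} \simeq \mapp{}{S^n}{(S^n)^\wedge_p} = \Omega^n (S^n)^\wedge_p$ (the first equivalence using that the target is $p$-complete), so every component is a loop space on $(S^n)^\wedge_p$ and $B\Z/p$-locality follows immediately from Miller. This bypasses the detour through $S^n$ and the subsequent passage to the completion.

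Second, and more importantly: the Bousfield--Kan formula you invoke, $\mapp{}{B\Z/p^k}{(S^n)^\wedge_p} \simeq \bigl(\mapp{}{B\Z/p^k}{S^n}\bigr)^\wedge_p$, is not a standard statement and is not obviously valid for an infinite source like $B\Z/p^k$. Your alternative---appealing directly to the $p$-complete form of Miller's theorem to see that $(S^n)^\wedge_p$ is $B\Z/p$-null---is exactly what the paper does, so you should lead with that. Your explicit induction on $k$ via the fibration $B\Z/p^{k-1} \to B\Z/p^k \to B\Z/p$ is a nice touch that the paper leaves implicit.
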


\begin{proof}
From Corollary~\ref{cor:joe}.\eqref{cor:joe2} we know that the spaces of
pointed self-homotopy equivalences of the $p$-completed sphere and its
$n$-connected cover coincide.
By adjunction the pointed mapping space $\mapp{}{\Sigma B\Z/p}
{B\aut{*}{(S^n)^\wedge_p}}$ is equivalent to
$\mapp{}{B\Z/p} {\aut{*}{(S^n)^\wedge_p}}$. But $\aut *{S^n
\langle n \rangle^\wedge_p}$ consists of certain components of
$\mapp{}{(S^n)^\wedge_p} {(S^n)^\wedge_p} \simeq \mapp{}{S^n}
{(S^n)^\wedge_p} = \Omega^n (S^n)^\wedge_p$, and all components of this
iterated loop space have the same homotopy type, and they are
$B\Z/p$-local by Miller's Theorem~\cite{Miller}.
\end{proof}

The following is much easier to prove for the connected cover than for the $p$-completed sphere itself.

\begin{lemma}
\label{lem:1/plocal}
The space $B\aut{*}{(S^n)^\wedge_p} \simeq B\aut{*}{S^n \langle n \rangle^\wedge_p}$ is 
$K(\Z[1/p], 2)$-local.
\end{lemma}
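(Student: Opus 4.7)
By the equivalence already stated (which follows from Corollary~\ref{cor:joe}.\eqref{cor:joe2}), it suffices to prove that $B\aut{*}{(S^n)^\wedge_p}$ is $K(\Z[1/p], 2)$-local, equivalently that the pointed mapping space $\mapp{}{K(\Z[1/p], 2)}{B\aut{*}{(S^n)^\wedge_p}}$ is contractible. The strategy is to split $B\aut{*}{(S^n)^\wedge_p}$ via its universal cover fibration and then show the pointed mapping space from $K(\Z[1/p], 2)$ into each of the two pieces is contractible.

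First I would identify $\aut{*}{(S^n)^\wedge_p}$ as the subspace of self-equivalences in $\mapp{}{S^n}{(S^n)^\wedge_p}=\Omega^n(S^n)^\wedge_p$ (using that the target is $p$-complete), with group of components the units $(\Z^\wedge_p)^\times \subset \Z^\wedge_p = \pi_n(S^n)^\wedge_p$. Letting $A_0$ denote its identity component, the associated universal cover fibration reads
\[
BA_0 \to B\aut{*}{(S^n)^\wedge_p} \to K((\Z^\wedge_p)^\times, 1).
\]
Applying $\mapp{}{K(\Z[1/p], 2)}{-}$ preserves this fibration, reducing matters to showing that both the base and the fiber become contractible. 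The base is immediate: since $K(\Z[1/p], 2)$ is simply connected, its reduced cohomology in degrees $0$ and $1$ vanishes with any coefficients, so $\mapp{}{K(\Z[1/p], 2)}{K((\Z^\wedge_p)^\times, 1)}\simeq *$.

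The hard part is the fiber, for which one needs $BA_0$ to be $H\F_p$-local. Its homotopy groups in degrees $k \geq 2$ agree with $\pi_{n+k-1}(S^n)^\wedge_p$, which for $n \geq 3$ odd are finite $p$-groups by Serre's theorem (and are Ext-$p$-complete more generally, with at most a single $\Z^\wedge_p$ summand appearing when $n$ is even). Being simply connected with Ext-$p$-complete homotopy groups, $BA_0$ is $H\F_p$-local by the Bousfield-Kan criterion. On the other hand, $\Z[1/p]\otimes\F_p = 0$ forces the reduced mod $p$ homology of $K(\Z[1/p], 2)$ to vanish, so $K(\Z[1/p], 2) \to *$ is an $H\F_p$-equivalence. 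Combining these two facts, the pointed mapping space from $K(\Z[1/p], 2)$ into $BA_0$ is contractible, completing the proof.
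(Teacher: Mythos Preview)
Your proof is correct but follows a genuinely different route from the paper's. The paper argues via cellularity: since $K(\Z[1/p],2)$ is $S^2[1/p]$-cellular, it suffices to show the target is $S^2[1/p]$-local; using the telescope model $S^2[1/p]=\hocolim(S^2\xrightarrow{p}S^2\xrightarrow{p}\cdots)$ this becomes a computation of $\holim(\Omega\aut{*}{S^n\langle n\rangle^\wedge_p}\xleftarrow{p}\cdots)$, whose homotopy groups vanish because one is taking an inverse limit of finite $p$-groups along multiplication by~$p$. Your argument instead splits the target via the universal cover fibration $BA_0\to B\aut{*}{(S^n)^\wedge_p}\to K((\Z^\wedge_p)^\times,1)$, disposes of the $K(-,1)$ base by simple connectivity of the source, and handles the cover by recognizing $BA_0$ as $H\F_p$-local (its homotopy groups being finite $p$-groups, hence Ext-$p$-complete) while $K(\Z[1/p],2)$ is $H\F_p$-acyclic.

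Both approaches ultimately rest on the same underlying fact, namely that the higher homotopy groups of $(S^n)^\wedge_p$ are finite $p$-groups; the paper exploits this through an explicit inverse-limit vanishing, while you package it as $H\F_p$-locality and invoke the Bousfield--Kan criterion. Your route is slightly more conceptual and avoids the cellularity step, at the cost of importing the $H\F_p$-localization machinery; the paper's route is more hands-on and self-contained. Both are valid and of comparable length.
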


\begin{proof}
Since $K(\Z[1/p], 2)$ is $S^2[1/p]$-cellular it is sufficient to
prove that the classifying space is $S^2[1/p]$-local. We use the
usual telescopic model for this localized sphere, i.e. the homotopy
colimit of the diagram $S^2 \xrightarrow{p} S^2 \xrightarrow{p}
\dots$. Hence we have weak equivalences
$$
\mapp{}{S^2[1/p]}{B\aut{*}{S^n \langle n \rangle^\wedge_p}} \simeq
\holim \mapp{}{S^2}{B\aut{*}{X_p}} = \holim \Omega \aut{*}{S^n
\langle n \rangle^\wedge_p}.
$$
The homotopy groups of this inverse limit are all trivial since the
towers we consider here consist of finite $p$-groups and
multiplication by~$p$ (in particular all $\hbox{\rm lim}^1$ terms vanish).
\end{proof}

Even though we are not so sure whether $B\aut{*}{S^n \langle n
\rangle^\wedge_p}$ is $p$-complete, the previous lemmas allow us to
understand maps out of $K(\Z, n)$.

\begin{proposition}
\label{prop:Klocal}
For any $m \geq 3$ the space $B\aut{*}{(S^n)^\wedge_p} \simeq B\aut{*}{S^n \langle n \rangle^\wedge_p}$ 
is $K(\Z, m)$-local. In particular it is $K(\Z, n)$-local.
\end{proposition}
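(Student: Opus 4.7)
The plan is to combine the two nullity properties established in Lemmas~\ref{lem:bzplocal} and~\ref{lem:1/plocal} by means of the fibration
\[
K(\Z/p^\infty, m-1) \to K(\Z, m) \to K(\Z[1/p], m)
\]
induced by the short exact sequence $0 \to \Z \to \Z[1/p] \to \Z/p^\infty \to 0$, and then to apply a Zabrodsky-type argument: once $Y = B\aut{*}{(S^n)^\wedge_p}$ is null with respect to both the fibre and the base, the induced map $\mapp{*}{K(\Z[1/p],m)}{Y} \to \mapp{*}{K(\Z,m)}{Y}$ is a weak equivalence whose source is contractible.

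For the base $K(\Z[1/p],m)$, the argument of Lemma~\ref{lem:1/plocal} transposes verbatim upon replacing $S^2[1/p]$ by $S^m[1/p]$: since $\Z[1/p]$ is torsion-free, $K(\Z[1/p],m)$ is $S^m[1/p]$-cellular for every $m\geq 2$, and $\mapp{*}{S^m[1/p]}{Y}$ is the homotopy inverse limit of the tower $\mapp{*}{S^m}{Y}=\Omega^m Y$ under multiplication by~$p$. This limit vanishes because the groups $\pi_{k+m}(Y)$ are finite $p$-groups or pro-$p$ complete, being shifts of $p$-completed homotopy groups of spheres.

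For the fibre I would start from the observation that the proof of Lemma~\ref{lem:bzplocal} actually establishes the stronger statement that $\Omega Y=\aut{*}{(S^n)^\wedge_p}$ is $B\Z/p^k$-null via Miller's theorem. Since every Eilenberg--MacLane space $K(\Z/p^k,n)$ is $B\Z/p^k$-cellular (via the iterated bar construction $K(\Z/p^k,n+1)=BK(\Z/p^k,n)$), I deduce that $\Omega Y$ is $K(\Z/p^k,n)$-null for all $n\geq 1$. To transfer this nullity from $\Omega Y$ to $Y$, I would invoke the bar filtration of $K(\Z/p^k,m-1)=BK(\Z/p^k,m-2)$, whose subquotients are
\[
\Sigma^{j} K(\Z/p^k, m-2)^{\wedge j} = (\Sigma K(\Z/p^k, m-2))^{\wedge j}.
\]
Iterating the adjunctions $\mapp{*}{A\wedge B}{Y}=\mapp{*}{A}{\mapp{*}{B}{Y}}$ and $\mapp{*}{\Sigma A}{Y}=\mapp{*}{A}{\Omega Y}$, with base case $\mapp{*}{\Sigma K(\Z/p^k,m-2)}{Y}=\mapp{*}{K(\Z/p^k,m-2)}{\Omega Y}\simeq *$, shows every subquotient is null with respect to $Y$. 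An induction along the filtration using the cofibre sequences $F_{j-1}\to F_j\to F_j/F_{j-1}$ then gives $Y$ is $K(\Z/p^k,m-1)$-null, and passing to the homotopy colimit over $k$ yields $Y$ is $K(\Z/p^\infty,m-1)$-null.

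Applying Zabrodsky's lemma to the fibration above, fibre nullity produces the weak equivalence $\mapp{*}{K(\Z[1/p],m)}{Y}\simeq\mapp{*}{K(\Z,m)}{Y}$, and the right hand side is contractible by the base step. The main obstacle will be the fibre step: one must pass from the weaker hypothesis $\Sigma B\Z/p^k$-nullity of $Y$ to full $K(\Z/p^k,m-1)$-nullity, which forces both the use of the sharper $B\Z/p^k$-nullity of $\Omega Y$ and careful bookkeeping along the bar filtration.
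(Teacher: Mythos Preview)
Your argument is correct and follows the same overall architecture as the paper: use the fibration $K(\Z_{p^\infty},m-1)\to K(\Z,m)\to K(\Z[1/p],m)$, show the target is null with respect to both fibre and base, and conclude by Zabrodsky's Lemma. The only genuine difference lies in how you handle the fibre. The paper observes that $K(\Z_{p^\infty},m-2)$ is $\bigvee_k B\Z/p^k$-cellular (as a telescope of such spaces) and then invokes the commutation rule of cellularization with loops \cite[3.A.2]{Dror} to conclude that $K(\Z_{p^\infty},m-1)$ is $\bigvee_k \Sigma B\Z/p^k$-cellular; Lemma~\ref{lem:bzplocal} then finishes immediately. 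Your bar-filtration computation is in effect a by-hand proof of this special case of the commutation rule: the subquotients $(\Sigma K(\Z/p^k,m-2))^{\wedge j}$ are $\Sigma B\Z/p^k$-cellular, so mapping them into $Y$ gives contractible spaces precisely because $Y$ is $\Sigma B\Z/p^k$-local. Note, incidentally, that what you call the ``sharper'' input --- that $\Omega Y$ is $B\Z/p^k$-null --- is by adjunction exactly the same statement as $Y$ being $\Sigma B\Z/p^k$-null, so no extra strength is actually used. Your route buys a self-contained argument avoiding the reference to \cite{Dror}; the paper's route is shorter and makes clear that only the $\Sigma B\Z/p^k$-locality recorded in Lemma~\ref{lem:bzplocal} is needed.
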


\begin{proof}
The space $K(\Z_{p^\infty}, m-2)$ is $\bigvee B\Z/p^k$-cellular, being a telescope of 
$K(\mathbb Z/p^k, m-2)$'s which are cellular. Hence 
$K(\Z_{p^\infty}, m-1)$ is $\bigvee \Sigma B\Z/p^k$-cellular
by the commutation rule of cellularization
with respect to loop spaces, \cite[Theorem~3.A.2]{Dror}.
Lemma~\ref{lem:bzplocal} implies then that
$\mapp{}{K(\Z_{p^\infty}, m-1)} {B\aut{*}{S^n \langle n \rangle^\wedge_p}}$
is contractible. Zabrodsky's Lemma \cite[Proposition~3.4]{MR97i:55028}
produces then an equivalence between $\mapp{}{K(\Z, m)}{B\aut{*}{S^n \langle n
\rangle^\wedge_p}}$ and $\mapp{}{K(\Z[1/p], m)}{B\aut{*}{S^n \langle n
\rangle^\wedge_p}}$, which is contractible by Lemma~\ref{lem:1/plocal}.
\end{proof}

We point out that the argument with a wedge is only necessary for $m=3$. For any larger value of $m$
we could have gone through the same proof with $\Sigma B \mathbb Z/p$.

\section{The extended Postnikov genus of an odd sphere}
\label{sec sphere}
We come now to our most sophisticated computation. We wish to
determine the extended Postnikov genus $\overline G_{[n]}(S^n)$ when $n$ is
odd and $X[n]$ is the $n$-th Postnikov section of $X$. In other words, we wish
to understand how many spaces $Y$ are extensions of $K(\Z, n)$ by
$S^n\langle n \rangle$, i.e. how many spaces look like a sphere
$S^n$ through the eyes of the $n$-th Postnikov section \emph{and}
the $n$-th connected cover functors. We write $S^n_p$ for
the fiberwise $p$-completion of $S^n$ sitting in the fibration $S^n
\langle n \rangle^\wedge_p \to S^n_p \to K(\Z,n)$.
By Theorem~~\ref{thm:fakes} and the previous section we know that
\begin{equation*}
\overline  G_{[n]} (S^n_p) = [K(\Z,n),B\aut{}{S^n \langle n
\rangle^\wedge_p}]/\{\pm 1\}, \qquad \overline G_{[n]}(S^n) =
[K(\Z,n),\prod_p B\aut{}{S^n \langle n \rangle^\wedge_p}]/\{\pm 1\}
\end{equation*}
where $-1$ acts on the integers by changing the sign.
The following corollary should certainly be
compared to Zabrodsky's \cite[Corollary~C']{MR901174}, where he
deals with locally finite homotopy groups in the source.

\begin{lemma}
For any $m \geq 3$ the space $B\aut{}{(S^n)^\wedge_p}$ is $K(\Z,
m)$-local. In particular it is $K(\Z, n)$-local.
\end{lemma}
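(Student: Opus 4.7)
The plan is to mirror the proof of Proposition~\ref{prop:Klocal}, substituting unpointed self-equivalences for the pointed ones. The only genuinely new input is the unpointed analog of Lemmas~\ref{lem:bzplocal} and~\ref{lem:1/plocal}; once these are in hand, the Zabrodsky-lemma argument transfers verbatim.

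The bridge from pointed to unpointed is the evaluation fibration $\aut{*}{X} \to \aut{}{X} \to X$ with $X = (S^n)^\wedge_p$. For any pointed CW-complex $K$ this produces a homotopy fiber sequence of pointed mapping spaces
$$
\mapp{}{K}{\aut{*}{X}} \to \mapp{}{K}{\aut{}{X}} \to \mapp{}{K}{X}.
$$
By the loop-suspension adjunction, $B\aut{}{X}$ is $\Sigma K$-null if and only if $\aut{}{X}$ is $K$-null, i.e.\ $\mapp{}{K}{\aut{}{X}} \simeq *$.

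For $K = B\Z/p^k$: Miller's theorem combined with the $B\Z/p$-cellularity of $B\Z/p^k$ makes $\mapp{}{B\Z/p^k}{X}$ contractible. A fibration over a contractible base is fiber-homotopy trivial, hence $\mapp{}{B\Z/p^k}{\aut{}{X}} \simeq \mapp{}{B\Z/p^k}{\aut{*}{X}}$, which is contractible by Lemma~\ref{lem:bzplocal}. Thus $B\aut{}{X}$ is $\Sigma B\Z/p^k$-null. An analogous computation with $K = S^2[1/p]$ shows that $B\aut{}{X}$ is $K(\Z[1/p],2)$-null: the base $\mapp{}{S^2[1/p]}{X}$ is a homotopy inverse limit of copies of $\Omega^2 X$ under multiplication by $p$, and is contractible because the homotopy groups of $X$ are finitely generated $\Z_p$-modules on which $\liminv$ and $\limi{1}$ of the tower vanish; the fiber is contractible by Lemma~\ref{lem:1/plocal}.

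With both unpointed analogs in place, I conclude by running the argument from Proposition~\ref{prop:Klocal} unchanged: the cellularity of $K(\Z_{p^\infty}, m-1)$ with respect to $\Sigma B\Z/p^k$ gives $\mapp{}{K(\Z_{p^\infty}, m-1)}{B\aut{}{X}} \simeq *$, Zabrodsky's Lemma identifies $\mapp{}{K(\Z, m)}{B\aut{}{X}}$ with $\mapp{}{K(\Z[1/p], m)}{B\aut{}{X}}$, and this is contractible by the second locality (as in Proposition~\ref{prop:Klocal}, $S^2[1/p]$-nullity propagates to $K(\Z[1/p], m)$-nullity for all $m \geq 2$ by looping and cellularity). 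The main technical point to watch is that $\aut{}{X}$ has many components, one for each homotopy class of self-equivalence, so pointed mapping space reasoning a priori only sees the basepoint component; this is bypassed cleanly by appealing to the triviality of a fibration over a contractible base, which upgrades the pointed information into full information about $\mapp{}{K}{\aut{}{X}}$.
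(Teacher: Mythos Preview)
Your argument is correct but more circuitous than the paper's. You establish unpointed analogs of Lemmas~\ref{lem:bzplocal} and~\ref{lem:1/plocal} for $B\aut{}{(S^n)^\wedge_p}$ via the evaluation fibration $\aut{*}{X}\to\aut{}{X}\to X$, and then rerun the Proposition~\ref{prop:Klocal} machinery. The paper instead applies Zabrodsky's lemma once directly to the universal fibration $(S^n)^\wedge_p \to B\aut{*}{(S^n)^\wedge_p} \to B\aut{}{(S^n)^\wedge_p}$: the fibre $(S^n)^\wedge_p$ is easily seen to be $K(\Z,m)$-local (it is $B\Z/p$-local and $S^2[1/p]$-local), the total space is $K(\Z,m)$-local by Proposition~\ref{prop:Klocal}, and Zabrodsky then forces the base to be $K(\Z,m)$-local. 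This avoids reproving the two preliminary lemmas in the unpointed setting and is considerably shorter; your approach, on the other hand, makes the parallelism with the pointed case completely explicit.

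One small slip to note: with $K=S^2[1/p]$ your fibration argument yields $\mapp{}{S^2[1/p]}{\aut{}{X}}\simeq *$, i.e.\ that $B\aut{}{X}$ is $\Sigma S^2[1/p]=S^3[1/p]$-null, not $K(\Z[1/p],2)$-null as you state. This is harmless, since the lemma only concerns $m\geq 3$ and $K(\Z[1/p],m)$ is $S^3[1/p]$-cellular for such $m$; alternatively, taking $K=S^1[1/p]$ gives the $S^2[1/p]$-nullity you claim (the base $\mapp{}{S^1[1/p]}{X}$ is still contractible for $n\geq 3$, and the fibre is exactly what Lemma~\ref{lem:1/plocal} computes).
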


\begin{proof}
Consider the universal fibration $(S^n)^\wedge_p \rightarrow
B\aut{*}{(S^n)^\wedge_p} \rightarrow B\aut{}{(S^n)^\wedge_p}$.
Since $(S^n)^\wedge_p$ is $B\mathbb Z/p$-local, it is also $K(\mathbb Z_{p^\infty}, m-1)$-local,
i.e. $\mapp{}{K(\mathbb Z_{p^\infty}, m-1)}{(S^n)^\wedge_p}$ is contractible. Likewise
$\mapp{}{K(\mathbb Z[1/p], m-1)}{(S^n)^\wedge_p} \simeq *$. Thus $(S^n)^\wedge_p$ is $K(\mathbb Z, m)$-local.
Next, we know that $B\aut{*}{(S^n)^\wedge_p}$ is $K(\Z, m)$-local by Proposition~\ref{prop:Klocal}. Hence, by
Zabrodsky's Lemma \cite[Lemma 4]{chuck96} we infer
that $\mapp{}{K(\Z, m)}{B\aut{}{(S^n)^\wedge_p}}$ is contractible.
\end{proof}

We compare now the classifying spaces $B\aut{}{(S^n)^\wedge_p}$ and
$B\aut{}{S^n \langle n \rangle^\wedge_p}$.

\begin{proposition}
\label{prop:padicdiscrete}
The space $\mapp{}{K(\Z, n)}{B\aut{}{S^n \langle n
\rangle^\wedge_p}}$ is homotopically discrete with $\Z^\wedge_p$
components.
\end{proposition}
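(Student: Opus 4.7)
My plan is to analyze $\mapp{}{K(\Z,n)}{B\aut{}{S^n\langle n\rangle_p^\wedge}}$ in two stages: I first show that all higher homotopy groups of this pointed mapping space vanish, then count the path components.

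For the vanishing of higher homotopy, I will apply $\mapp{}{K(\Z,n)}{-}$ to the universal fibration
\[
S^n\langle n\rangle_p^\wedge \to B\aut*{S^n\langle n\rangle_p^\wedge} \to B\aut{}{S^n\langle n\rangle_p^\wedge}.
\]
The middle mapping space is contractible because $B\aut*{S^n\langle n\rangle_p^\wedge} \simeq B\aut*{(S^n)_p^\wedge}$ by Corollary~\ref{cor:joe}(2), and this space is $K(\Z,n)$-local by Proposition~\ref{prop:Klocal}. To see that the fibre mapping space $\mapp{}{K(\Z,n)}{S^n\langle n\rangle_p^\wedge}$ is also contractible, I apply $\mapp{}{K(\Z,n)}{-}$ to the Postnikov fibration $S^n\langle n\rangle_p^\wedge \to (S^n)_p^\wedge \to K(\Z_p^\wedge,n)$: the total-space term vanishes by the preceding lemma, while the base term $\mapp{}{K(\Z,n)}{K(\Z_p^\wedge,n)} \simeq \Hom{\Z}{\Z_p^\wedge} = \Z_p^\wedge$ is a discrete space, so the long exact sequence forces the fibre term to be contractible. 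Plugging both contractibilities into the long exact sequence from the universal bundle yields $\pi_k \mapp{}{K(\Z,n)}{B\aut{}{S^n\langle n\rangle_p^\wedge}} = 0$ for every $k \geq 1$, so this mapping space is homotopically discrete.

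For the component count, I exploit $p$-completeness. By Corollary~\ref{cor:joe}(2) the homotopy groups of $\aut{}{S^n\langle n\rangle_p^\wedge}$ are $p$-complete, so $B\aut{}{S^n\langle n\rangle_p^\wedge}$ is nilpotent with $p$-complete homotopy and hence $p$-complete. Since $K(\Z,n) \to K(\Z_p^\wedge,n)$ is the $p$-completion and an $H\F_p$-equivalence, we obtain
\[
\mapp{}{K(\Z,n)}{B\aut{}{S^n\langle n\rangle_p^\wedge}} \simeq \mapp{}{K(\Z_p^\wedge,n)}{B\aut{}{S^n\langle n\rangle_p^\wedge}}.
\]
The fibration $S^n\langle n\rangle_p^\wedge \to (S^n)_p^\wedge \to K(\Z_p^\wedge,n)$ is classified by a distinguished pointed map $\rho\colon K(\Z_p^\wedge,n) \to B\aut{}{S^n\langle n\rangle_p^\wedge}$, and precomposing $\rho$ with pointed self-maps of $K(\Z_p^\wedge,n)$, which form the monoid $\operatorname{End}_*(K(\Z_p^\wedge,n)) = \Z_p^\wedge$, yields a map $\Z_p^\wedge \to \pi_0\mapp{}{K(\Z_p^\wedge,n)}{B\aut{}{S^n\langle n\rangle_p^\wedge}}$. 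I would show this is a bijection: injectivity because distinct $u \neq v \in \Z_p^\wedge$ produce pulled-back bundles whose projections onto $K(\Z_p^\wedge,n)$ differ on $\pi_n$, and surjectivity because any fibration $E \to K(\Z_p^\wedge,n)$ with fibre $S^n\langle n\rangle_p^\wedge$ satisfies $\pi_nE \cong \Z_p^\wedge$ by its long exact sequence and is determined up to fibre-homotopy equivalence by the $\pi_n$-degree of its projection, which exhibits $E$ as the pullback of $(S^n)_p^\wedge$ along the corresponding $[u]$.

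The main technical obstacle is the surjectivity half of the last step: showing that every pointed classifying map arises as $\rho\circ[u]$. The argument requires the rigidity of $(S^n)_p^\wedge$ among spaces with its Postnikov data, and the vanishing of higher homotopy from Part~1 is crucial here because it reduces a question about continuous moduli of classifying maps to a purely discrete question about components indexed by $\pi_n$-degrees.
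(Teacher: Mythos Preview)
Your discreteness argument is correct, though it uses the universal fibration for $S^n\langle n\rangle_p^\wedge$ rather than the fibration the paper uses. Both work for that part.

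The component count, however, has a genuine gap. You claim that $B\aut{}{S^n\langle n\rangle_p^\wedge}$ is $p$-complete because its homotopy groups are $p$-complete. This fails for odd~$p$: we have $\pi_1 B\aut{}{S^n\langle n\rangle_p^\wedge} \cong \pi_0\aut{*}{(S^n)_p^\wedge} \cong (\Z_p^\wedge)^\times \cong \Z/(p-1)\times \Z_p^\wedge$, and the factor $\Z/(p-1)$ is not $p$-complete (indeed $K(\Z/(p-1),1)$ has trivial mod~$p$ homology, so its $p$-completion is a point). The paper explicitly flags this uncertainty just before Proposition~\ref{prop:Klocal}, writing that it is ``not so sure whether $B\aut{*}{S^n\langle n\rangle_p^\wedge}$ is $p$-complete'', and deliberately avoids relying on it. Without $p$-completeness your passage from $K(\Z,n)$ to $K(\Z_p^\wedge,n)$ is unjustified, and then the surjectivity step---that every fibration over $K(\Z_p^\wedge,n)$ with fibre $S^n\langle n\rangle_p^\wedge$ is a pullback of the standard one along a degree map---is left as an unproven rigidity statement, which you yourself identify as the main obstacle.

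The paper bypasses all of this with a single fibration that does both jobs at once. From $B\aut{*}{S^n\langle n\rangle_p^\wedge}\simeq B\aut{*}{(S^n)_p^\wedge}$ one obtains a fibration
\[
K(\Z_p^\wedge,n)\longrightarrow B\aut{}{S^n\langle n\rangle_p^\wedge}\longrightarrow B\aut{}{(S^n)_p^\wedge},
\]
comparing the two \emph{unpointed} classifying spaces; the fibre is the difference between the universal fibres $S^n\langle n\rangle_p^\wedge$ and $(S^n)_p^\wedge$. Applying $\mapp{}{K(\Z,n)}{-}$, the base becomes contractible by the preceding lemma (that $B\aut{}{(S^n)_p^\wedge}$ is $K(\Z,n)$-local), and the fibre becomes $\mapp{}{K(\Z,n)}{K(\Z_p^\wedge,n)}\simeq \Z_p^\wedge$, discrete. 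This gives discreteness and the component count simultaneously, with no appeal to $p$-completeness of the classifying space and no separate surjectivity argument.
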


\begin{proof}
Since there is a weak equivalence $B\aut{*}{S^n \langle n
\rangle^\wedge_p} \simeq B\aut{*}{(S^n)^\wedge_p}$ by
Corollary~\ref{cor:joe}.\eqref{cor:joe2}, we have a fibration
$K(\Z^\wedge_p, n) \rightarrow B\aut{}{S^n \langle n
\rangle^\wedge_p} \rightarrow B\aut{}{(S^n)^\wedge_p}$. The
proposition follows now from the previous lemma.
\end{proof}

We finally arrive at the unpointed mapping space.

\begin{theorem}
\label{thm:plocal}
The set of components of the space $\map{}{K(\Z, n)}{B\aut{}{S^n
\langle n \rangle^\wedge_p}}$ is $\Z^\wedge_p/(\Z^\wedge_p)^\times$.
It is in particular an infinite, countable set. Explicitely, $\overline
G_{[n]}(S^n_p)$ is in bijection with the set $\N_+$ of natural numbers with 
a disjoint base point $\ast$.
\end{theorem}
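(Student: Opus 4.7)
The plan is to extract $\pi_0$ of the unpointed mapping space from the evaluation-at-basepoint fibration
\[
\mapp{}{K(\Z,n)}{B\aut{}{S^n\langle n\rangle^\wedge_p}} \longrightarrow \map{}{K(\Z,n)}{B\aut{}{S^n\langle n\rangle^\wedge_p}} \xrightarrow{\mathrm{ev}} B\aut{}{S^n\langle n\rangle^\wedge_p},
\]
and then feed the result into Theorem~\ref{thm:fakes}. Since the base is path connected with loop space $\aut{}{S^n\langle n\rangle^\wedge_p}$, the tail of the long exact sequence identifies $\pi_0$ of the total space with the orbits of $\pi_0\aut{}{S^n\langle n\rangle^\wedge_p}$ acting on $\pi_0$ of the fiber.

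First I would invoke Proposition~\ref{prop:padicdiscrete} to identify $\pi_0$ of the fiber with $\Z^\wedge_p$. Then, using Corollary~\ref{cor:joe}.\eqref{cor:joe2} together with the classical fact that the $p$-adic degree identifies $\pi_0\aut{}{(S^n)^\wedge_p}$ with $(\Z^\wedge_p)^\times$, I identify the acting group as the units. The substantive step is then to check that this group acts on $\Z^\wedge_p$ by multiplication: tracing the proof of Proposition~\ref{prop:padicdiscrete}, the pointed class labelled by $d \in \Z^\wedge_p$ comes from lifting the constant map $K(\Z,n) \to B\aut{}{(S^n)^\wedge_p}$ through the degree $d$ map into $K(\Z^\wedge_p, n)$, and post-composing a classifying map with a loop of degree $u$ in $B\aut{}{S^n\langle n\rangle^\wedge_p}$ twists the fibre by $u$, hence multiplies the label by $u$.

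Granted this, the orbit set of $(\Z^\wedge_p)^\times$ acting by multiplication on $\Z^\wedge_p$ is the disjoint union of $\{0\}$ and the principal orbits $p^k(\Z^\wedge_p)^\times$ for $k = 0, 1, 2, \ldots$, which is a countably infinite pointed set, canonically identified with $\N_+$ via $0 \mapsto \ast$ and $p^k \mapsto k$. This already proves the first two assertions of the theorem. For the last one, Theorem~\ref{thm:fakes} reduces $\overline G_{[n]}(S^n_p)$ to a further quotient by the precomposition action of $\Aut{}{K(\Z,n)} = \{\pm 1\}$; since $-1 \in (\Z^\wedge_p)^\times$, this quotient is already accounted for in the orbit computation, and the genus set equals $\N_+$.

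The step I expect to be the main obstacle is the explicit identification of the $(\Z^\wedge_p)^\times$-action on the set of pointed classes, because this requires unwinding how the $\Z^\wedge_p$-parameter of Proposition~\ref{prop:padicdiscrete} is built from the fibration $K(\Z^\wedge_p,n) \to B\aut{}{S^n\langle n\rangle^\wedge_p} \to B\aut{}{(S^n)^\wedge_p}$ and matching it with the post-composition action coming from $\pi_1$ of the base. Once that is in hand, the remaining steps are essentially bookkeeping.
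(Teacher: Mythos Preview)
Your proposal is correct and follows essentially the same route as the paper: the evaluation fibration, the identification of the fiber via Proposition~\ref{prop:padicdiscrete}, the computation $\pi_0\aut{}{S^n\langle n\rangle^\wedge_p}\cong(\Z^\wedge_p)^\times$ via Corollary~\ref{cor:joe}.\eqref{cor:joe2}, and the observation that the action is the natural one on $\pi_n(S^n)^\wedge_p$. The paper dispatches the action step in a single sentence, so your instinct that this is where the care is needed is sound, but your unwinding of it is exactly what is implicit there; the remaining $\pm 1$ quotient and the bijection with $\N_+$ are handled identically.
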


\begin{proof}
The homotopy fiber of the evaluation $\map{}{K(\Z, n)}{B\aut{}{S^n
\langle n \rangle^\wedge_p}} \rightarrow B\aut{}{S^n \langle n
\rangle^\wedge_p}$ is homotopically discrete and identifies with
$\Hom{\Z}{\Z^\wedge_p}$ by the last proposition. Moreover the
fundamental group $\pi_1 B\aut{}{S^n \langle n \rangle^\wedge_p}
\cong \pi_0 \aut{}{S^n \langle n \rangle^\wedge_p}$ coincides with
$\pi_1 B\aut{*}{(S^n)^\wedge_p} \cong (\Z^\wedge_p)^\times$, the
$p$-adic units. Their action on the $p$-adic integers comes from the
natural action on $\pi_n (S^n)^\wedge_p$. Thus the components of the
mapping space we are looking at is the quotient
$\Z^\wedge_p/(\Z^\wedge_p)^\times$.

Let $\N=\{0,1,2,\ldots\}$ be the set of natural numbers and
$\N_+$ the union of $\N$ with a disjoint base point $\ast$.  The
quotient $\Z^\wedge_p/\Z_p^\times$ is in bijection with the set $\N_+$
because any non-zero $p$-adic integer can be uniquely written as
$p^ku$ where $k \in \N$ and $u$ is a unit \cite{MR0344216}. The extended
genus set has been identified as the quotient of this set under the action of $\pm 1$.
However, since $-1$ in $\mathbb Z$ is sent in the $p$-adics to a unit, there are no
further identifications.
\end{proof}

\begin{construction}
\label{exmp:plocal}
Here is an explicit way to construct the countable set $\overline
G_L(S^n_p)$ of spaces $Y$ with $\pi_n Y \cong \Z$ and $Y\langle n
\rangle \simeq S^n \langle n \rangle^\wedge_p$.

The fibration $S^n \langle n \rangle^\wedge_p \to S^n_p \to K(\Z,n)$ is classified by a map 
$c \colon K(\Z,n) \to B\aut{}{S^n \langle n \rangle^\wedge_p}$. The proof of
Theorem~\ref{thm:plocal} shows that there is a bijection
   \begin{equation*}
     \N_+ \to
   [K(\Z,n),B\aut{}{S^n \langle n \rangle^\wedge_p}]/\{ \pm 1 \}
   \end{equation*}
   taking $\ast$ to the constant map and the nonnegative integer $k
   \in \N$ to $c \circ p^k$.

   Define the space $Y_{p,*}$ to be $S^n \langle n \rangle^\wedge_p \times K(\Z, n)$ and
   $Y_{p,k}$, $k \in \N$, to be the homotopy pull-back of $K(\Z,n)
   \xrightarrow{p^k} K(\Z,n) \leftarrow S^n_p$, or, equivalently, the
   homotopy fibre of $S^n_p \to K(\Z,n) \rightarrow
   K(\Z/p^k,n)$, where the second map is reduction mod $p^k$. The bijection is then given by
\[
   \begin{array}{lcl}
     \N_+ & \longrightarrow & \overline G_{[n]}(S^n_p) \\
     k & \longmapsto & Y_{p,k}
   \end{array}
\]
\end{construction}

We show now that one can detect which fake partially $p$-completed sphere one is considering by a simple cohomological computation.
We will be more precise in the proof.

\begin{proposition}
\label{prop:distinguish}
Ordinary cohomology distinguishes all elements in the extended Postnikov genus set $\overline G_{[n]}(S^n_p)$.
\end{proposition}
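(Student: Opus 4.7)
The strategy is to exploit the explicit description from Construction~\ref{exmp:plocal}. For each $k\in\N$, the space $Y_{p,k}$ is classified by $c\circ p^k \colon K(\Z,n)\to B\aut{}{S^n\langle n\rangle^\wedge_p}$, where $c$ classifies the fibration $S^n\langle n\rangle^\wedge_p\to S^n_p\to K(\Z,n)$, while $Y_{p,*}=S^n\langle n\rangle^\wedge_p\times K(\Z,n)$ is classified by the constant map. The transgressions in the Serre spectral sequence of the fibration $S^n\langle n\rangle^\wedge_p \to Y_{p,k} \to K(\Z,n)$ are the pull-backs along this classifying map, so they carry a factor of $p^k$ inherited from the action of $p^k\colon K(\Z,n)\to K(\Z,n)$ on cohomology, and they vanish entirely in the split case $k=*$.

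For a pair $Y_{p,k}$ and $Y_{p,k'}$ of distinct elements, fix an integer $m$ strictly larger than both $k$ and $k'$ (taking $m$ arbitrary in the case $k=*$), and run the Serre spectral sequence with coefficients in $\Z/p^m$. A suitable class of $H^*(S^n\langle n\rangle^\wedge_p;\Z/p^m)$ transgresses to an element $\theta\in\Z/p^m\subset H^*(K(\Z,n);\Z/p^m)$ which is multiplied by $p^k$ under $(p^k)^*$. For different values of $k\le m$, and for $k=*$ where the transgression is zero, the resulting elements $p^k\cdot\theta$ are pairwise different in $\Z/p^m$, so the spectral sequences have visibly different $E_\infty$-pages. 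This difference propagates to intrinsic data of $H^*(Y_{p,k};\Z/p^m)$, for instance to the order $p^{m-k}$ of the image of the filtered piece coming from the base $K(\Z,n)$ in the appropriate cohomology group.

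The main obstacle I foresee is that the numerical invariant $p^k$ modulo $p^m$ is only visible provided $m>k$, so no fixed coefficient system $\Z/p^m$ separates all the $Y_{p,k}$ at once: for each pair one must choose a sufficiently large~$m$. A subsidiary check is that the invariants we extract are intrinsic to the homotopy type of $Y_{p,k}$ rather than to the auxiliary map $Y_{p,k}\to S^n_p$ used in the construction. This follows from the functoriality of the Serre spectral sequence of the \emph{canonical} fibration $\ncov{Y_{p,k}}{n}\to Y_{p,k}\to Y_{p,k}[n]$, whose fibre $S^n\langle n\rangle^\wedge_p$ and base $K(\Z,n)$ are determined by the homotopy type of $Y_{p,k}$ alone.
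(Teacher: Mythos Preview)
Your underlying idea is correct and is in fact the same phenomenon the paper exploits: pulling back the classifying map along $p^k$ scales the transgressions in the Serre spectral sequence of $S^n\langle n\rangle^\wedge_p \to Y_{p,k} \to K(\Z,n)$ by a power of~$p$. But as written the argument has two gaps. First, the ``suitable class'' in $H^*(S^n\langle n\rangle^\wedge_p;\Z/p^m)$ and its transgression target $\theta$ are never identified; you would need to name a specific degree, check that $\theta$ really generates a $\Z/p^m$, and verify that no other differential interferes. Second, and more seriously, your proposed invariant --- the order of the image of the bottom filtration piece --- is an invariant of the Postnikov fibration, not of the cohomology ring or the Steenrod module structure. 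You acknowledge this and argue that the fibration is canonical, which makes your quantity a \emph{homotopy} invariant, but it does not yet exhibit it as ``ordinary cohomology'' data. Saying ``this difference propagates to intrinsic data of $H^*(Y_{p,k};\Z/p^m)$'' is exactly the step that needs proof.

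The paper closes both gaps simultaneously by naming the transgression target explicitly as a cohomology operation on the fundamental class $\iota\in H^n(Y;\Z)$. At $p=2$ the relevant class is the cup square: $Y\simeq Y_{2,k}$ where $k$ is the least integer with $(2^k\iota)^2=0$; at an odd prime it is a Steenrod operation: $k$ is the least integer with $\mathcal{P}^1(p^k\iota)=0$. These are manifestly invariants of the cohomology ring (respectively, of the Steenrod module structure) and require no appeal to the spectral sequence filtration. In effect the paper carries your transgression argument to its conclusion by recognising that the transgression of the bottom fibre class \emph{is} a universal cohomology operation on~$\iota$, so that its vanishing or non-vanishing in $H^*(Y)$ can be read off directly.
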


\begin{proof}
By Construction~\ref{exmp:plocal}
these fake partially completed spheres fit into a tower of fibrations
 \begin{equation*}
    \dots
 \rightarrow Y_{p, k+1} \xrightarrow{f} Y_{p, k} \rightarrow \dots
 \rightarrow Y_{p, 1} \xrightarrow{f} Y_{p, 0} = S^n_p
 \end{equation*}
with fibers $K(\Z/p, n-1)$. Let $\iota_k$ denote a generator of
$H^n(Y_{p, k}; \Z) \cong \Z$, chosen in such a way that the image of
$\iota_k$ under $f^*$ is $p \iota_{k+1}$.

At the prime $2$ the algebra structure is sufficient to distinguish
the fakes. Notice that the mod $2$ reduction of $\iota_1$ is
a polynomial generator detected in the mod $2$ cohomology of
$K(\Z/2, n-1)$, but $\iota_0$ is an exterior generator. Therefore
$(2^k \iota_k)^2 = 0$, but $(2^{k-1} \iota_k)^2 \neq 0$. This shows
that if $Y$ is any $(n-1)$-connected space with $\pi_n Y \cong \Z$ and $Y\langle n
\rangle \simeq (S^n \langle n \rangle)^\wedge_2$, and $\iota$
denotes a generator of $H^n(Y; \Z)$, then $Y \simeq Y_{2, k}$
where $k$ is the smallest integer such that $(2^k \iota)^2 = 0$.

At an odd prime $p$, the mod $p$ reduction of $\iota_1$ is an
\emph{exterior} generator detected in the mod $p$ cohomology of
$K(\Z/2, n-1)$, but it has non-trivial integral Steenrod operations,
such as $\mathcal P^1 \beta$, acting on it. This operation is
represented by classes of order $p$ in $H^{n+2p-1}(K(\Z, n); \Z)$
corresponding to the pair $(\mathcal P^1 \iota_n, \beta \mathcal P^1
\iota_n)$ in mod $p$ cohomology. This shows that if $Y$ is any space
with $\pi_n Y \cong \Z$ and $Y\langle n \rangle \simeq (S^n \langle
n \rangle)^\wedge_p$, and $\iota$ denotes a generator of $H^n(Y;
\Z)$, then $Y \simeq Y_{p, k}$ where $k$ is the smallest integer
such that $\mathcal P^1 (p^k \iota) = 0$.
\end{proof}

\begin{theorem}
\label{thm:classification}
The extended Postnikov genus set $\overline G_{[n]}(S^n)$ of homotopy types
of spaces $Y$ such that $Y[n] \simeq K(\Z, n)$ and $Y\langle n
\rangle \simeq S^n \langle n \rangle$ is uncountable, in bijection
with $\prod_p \N_+$, where the product is taken over all primes.
\end{theorem}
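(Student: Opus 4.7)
The plan is to reduce the global statement to the $p$-local computation of Theorem~\ref{thm:plocal} via two standard facts: the splitting of $S^n\langle n\rangle$ into its $p$-completions (already recorded at the start of Section~\ref{sec selfequivalences}) and the commutation of mapping spaces with products. First I would note that the hypothesis of Theorem~\ref{thm:fakes} holds for $L = [n]$: since $\overline L S^n = S^n\langle n\rangle$ is $n$-connected, $L\overline L S^n = (S^n\langle n\rangle)[n]$ is contractible. Combined with $\Aut{}{K(\Z,n)} = \{\pm 1\}$, this gives the formula already displayed at the beginning of this section,
\[
\overline G_{[n]}(S^n) = \bigl[K(\Z,n),\textstyle\prod_p B\aut{}{S^n\langle n\rangle^\wedge_p}\bigr]/\{\pm 1\},
\]
where the product decomposition of $B\aut{}{S^n\langle n\rangle}$ comes from delooping the splitting $\aut{}{S^n\langle n\rangle} \simeq \prod_p \aut{}{S^n\langle n\rangle^\wedge_p}$ recorded in Section~\ref{sec selfequivalences}.

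Next, since mapping spaces convert products in the target into products, and $\pi_0$ commutes with arbitrary products, there is a natural bijection
\[
\bigl[K(\Z,n),\textstyle\prod_p B\aut{}{S^n\langle n\rangle^\wedge_p}\bigr] \cong \prod_p \bigl[K(\Z,n), B\aut{}{S^n\langle n\rangle^\wedge_p}\bigr].
\]
By Theorem~\ref{thm:plocal}, each factor on the right is the set $\Z^\wedge_p/(\Z^\wedge_p)^\times$, which is in bijection with $\N_+$ via the factorization $p^k u$ of a nonzero $p$-adic integer.

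It then remains to handle the residual $\{\pm 1\}$-quotient. The element $-1 \in \Aut{}{K(\Z,n)}$ acts diagonally on the product by precomposition, and on each factor this action coincides with the action of $-1 \in \Z^\wedge_p$ on $\Z^\wedge_p$. As already exploited in the proof of Theorem~\ref{thm:plocal}, $-1$ is a unit in every $\Z^\wedge_p$, so the diagonal action is absorbed factorwise by the $(\Z^\wedge_p)^\times$-quotient and produces no further identifications. Hence $\overline G_{[n]}(S^n) \cong \prod_p \N_+$, which is uncountable because each $\N_+$ has more than one element and the index set of primes is countably infinite.

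The only point requiring any care is the bookkeeping of the global $\{\pm 1\}$-action on the infinite product, but the observation that $-1$ is a $p$-adic unit for every $p$ makes the diagonal action trivial on each factor after the unit quotient; consequently the proof is essentially a formal assembly of Theorem~\ref{thm:plocal} with the splittings from Section~\ref{sec selfequivalences}.
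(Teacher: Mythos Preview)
Your proof is correct and follows essentially the same route as the paper: apply Theorem~\ref{thm:fakes}, use the prime-by-prime splitting of $B\aut{}{S^n\langle n\rangle}$ from Section~\ref{sec selfequivalences}, invoke Theorem~\ref{thm:plocal} on each factor, and observe that the residual $\{\pm 1\}$-action is trivial because $-1$ is a $p$-adic unit for every~$p$. You spell out a couple of points (the verification that $L\bar L S^n \simeq *$ and the commutation of $[-,-]$ with products) that the paper leaves implicit, but the argument is the same.
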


\begin{proof}
We apply Theorem~\ref{thm:fakes} and the identification
$B\aut{}{S^n \langle n \rangle} \simeq \prod B\aut{}{S^n \langle n
\rangle^\wedge_p}$ obtained above. Theorem~\ref{thm:plocal} shows
that the set of unpointed homotopy classes $[K(\Z, n), B\aut{}{S^n
\langle n \rangle}]$ is uncountable, in bijection with $\prod_p
\N_+$ and it remains to identify the action of the finite group
$\Aut{}{K(\Z, n)} \cong \Z/2$. But we have seen that it is trivial 
at each prime  since $-1$ is a unit in the $p$-adic integers.
Hence the action is trivial.
\end{proof}

Elaborating a little bit on Construction~\ref{exmp:plocal}, one can
explicitly construct all these fake spheres.

\begin{construction}
\label{rem:allfakespheres}
We identify $[K(\Z,n),B\aut{}{S^n \langle n \rangle}]$ with $\prod_p \N_+$. 
An element in this set is
a sequence $K= (k_p)$ consisting either of a natural number or the base
point $*$ for each prime $p$. For each such sequence consider the
homotopy pull-back $Y_K$ of the diagram
\[
\prod_p Y_{p,k} \to \prod K(\Z,n) \stackrel{\Delta}{\longleftarrow} K(\mathbb Z, n)
\]
where the spaces $Y_{p, k}$ have been
constructed in Example~\ref{exmp:plocal} and the second arrow is given by the
diagonal inclusion. The homotopy fiber of the map $Y_K \rightarrow K(\mathbb Z, n)$ is the product
$\prod_p S^n \langle n \rangle^\wedge_p \simeq S^n \langle n
\rangle$. The restriction to $B\aut{}{S^n \langle n
\rangle^\wedge_p}$ yields $Y_{p, k}$ which is classified by $k_p$.
This describes all spaces in $\overline G_{[n]}(S^n)$.
\end{construction}

Thus we have a good handle on all these fake spheres $S^n$. What is
so special about the good old $S^n$ among them? The answer is in Theorem~\ref{thm:Postnikovgenus}.

\begin{proposition}
\label{prop:difference}
Let $Y$ be a space such that $Y[n] \simeq K(\Z, n)$ and $Y\langle n
\rangle \simeq S^n \langle n \rangle$. Then, if $Y$ is a finite
complex, $Y$ has the homotopy type of $S^n$. \hfill{\qed}
\end{proposition}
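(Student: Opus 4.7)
The plan is to deduce this immediately from Theorem~\ref{thm:Postnikovgenus} applied to $X = S^n$ and $N = n$. First, I would verify the hypotheses. Since $n \geq 3$ is odd, the sphere $S^n$ is a simply connected finite CW-complex, and $\pi_2(S^n) = 0$ is trivially finite. Moreover, by Serre's classical computation, the rational homotopy of an odd sphere is concentrated in degree $n$, so $\pi_{>n}(S^n) \otimes \Q = 0$. Thus all three hypotheses of Theorem~\ref{thm:Postnikovgenus} are satisfied with $N = n$.

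Next, I would observe that the assumptions $Y[n] \simeq K(\Z, n) \simeq S^n[n]$ and $Y\langle n \rangle \simeq S^n\langle n \rangle$ place $Y$ in the extended Postnikov genus $\overline G_{[n]}(S^n)$ by definition. Under the further hypothesis that $Y$ is a finite CW-complex, we obtain $Y \in G_{[n]}(S^n)$. Theorem~\ref{thm:Postnikovgenus} then asserts that $S^n$ is the unique finite complex in $G_{[n]}(S^n)$, from which $Y \simeq S^n$ follows.

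There is no real obstacle here: the proposition is a direct specialization of a result established earlier in the paper via Wilkerson's double coset formula and Neisendorfer's theorem. What is worth underlining in the write-up is the contrast with Theorem~\ref{thm:classification}: the extended genus $\overline G_{[n]}(S^n)$ contains uncountably many homotopy types, yet imposing the single additional constraint of finiteness collapses the set to $\{S^n\}$. This reflects the rigidity provided by Neisendorfer's theorem, which forces the profinite completion and rationalization of a finite complex $Y$ in the genus to agree with those of $S^n$, so that $Y$ lies in the classical Mislin genus $G(S^n) = \{S^n\}$.
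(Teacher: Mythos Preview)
Your proposal is correct and is precisely the paper's own argument: the proposition carries a \qed\ symbol with no separate proof because the sentence preceding it already points to Theorem~\ref{thm:Postnikovgenus}, and your verification of its hypotheses for $X=S^n$, $N=n$ is exactly what is needed.
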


Finally we address the question of what happens when one changes the
$n$-th Postnikov section for a higher one. The result will basically
remain the same. An explicit computation would prove to be more
difficult, but the concrete example of fake spheres we have produced
serve equally well now.

\begin{proposition}
Let $Y$ be an element in the extended Postnikov genus $\overline G_{[n]}(S^n_p)$,
and $m \geq n$. For any large enough prime $p$ we have that $Y[m]
\simeq S^n_p[m]$ and $Y \langle m \rangle \simeq S^n_p \langle m
\rangle$.
\end{proposition}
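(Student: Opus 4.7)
The plan is to exploit Serre's classical finiteness theorem: for $n$ odd and $k>0$, the group $\pi_{n+k}(S^n)$ is finite and has no $p$-torsion whenever $2p-3 > k$. Given $m \geq n$, I would first choose $p$ large enough that $\pi_i(S^n) \otimes \Z^\wedge_p = 0$ for every $n < i \leq m$. Since $S^n\langle n\rangle$ is a simply connected nilpotent space of finite type, this is equivalent to saying that its $p$-completion $S^n\langle n\rangle^\wedge_p$ is $m$-connected.

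From this observation both desired equivalences would follow at once. By definition of $\overline G_{[n]}(S^n_p)$, any element $Y$ satisfies $Y\langle n\rangle \simeq S^n\langle n\rangle^\wedge_p$, so $Y\langle n\rangle$ is itself $m$-connected, whence $Y\langle m\rangle \simeq Y\langle n\rangle$. Applying the same reasoning to $S^n_p$ yields $S^n_p\langle m\rangle \simeq S^n_p\langle n\rangle \simeq S^n\langle n\rangle^\wedge_p$, and one concludes $Y\langle m\rangle \simeq S^n_p\langle m\rangle$. For the Postnikov sections, the vanishings $\pi_i(Y) = \pi_i(Y\langle n\rangle) = 0$ for $n < i \leq m$ force $Y[m] \simeq Y[n] \simeq K(\Z,n)$, and the identical computation gives $S^n_p[m] \simeq K(\Z,n)$.

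The only real obstacle is cosmetic, namely extracting an explicit lower bound on $p$ in terms of $m-n$ from Serre's theorem. No additional homotopical input is needed: once the range of primes is fixed so that $S^n\langle n\rangle^\wedge_p$ is $m$-connected, the statement is automatic, and in particular the position of $k$-invariants plays no role because every space in sight becomes either a single Eilenberg--MacLane space or the $n$-connected cover itself in the relevant range.
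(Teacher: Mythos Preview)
Your proposal is correct and follows essentially the same approach as the paper: both invoke Serre's theorem that $\pi_{n+k}(S^n)$ has no $p$-torsion for $k<2p-3$, and the paper's explicit bound $p>\dfrac{m-n+3}{2}$ is precisely the one your argument produces. The only cosmetic difference is that the paper handles the $m$-connected cover statement without appealing to Serre at all---since $Y\langle n\rangle \simeq S^n_p\langle n\rangle$ by definition of the genus, one has $Y\langle m\rangle \simeq S^n_p\langle m\rangle$ for \emph{every} prime $p$, not just large ones---whereas you route this through the $m$-connectedness of $S^n\langle n\rangle^\wedge_p$.
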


\begin{proof}
Since $Y$ has been constructed so that $Y \langle n \rangle
\simeq S^n \langle n \rangle^\wedge_p$, the same is true for a higher
connected cover. The claim about the $m$-th Postnikov section
follows by choosing $p > \dfrac{m-n+3}{2}$ so $\pi_* S^n$ has no $p$-torsion in
degrees $< m$.
\end{proof}

This implies again that, for any $m$, there are uncountably many
homotopy types of spaces which look like odd spheres through the
eyes of the $m$-th Postnikov section and $m$-connected cover. We end
the section with a related computation of the extended Postnikov
genus of complex projective spaces.

\begin{theorem}
\label{thm:CPngenus}
The extended Postnikov genus set $\overline G_{[2n+1]}(\C P^n)$ is uncountable for any $n \geq 1$.
\end{theorem}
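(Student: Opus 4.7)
The plan is to apply Theorem~\ref{thm:fakes} and construct, for each sequence $K = (k_p)_p \in \prod_p \N$, a distinct element of $\overline G_{[2n+1]}(\C P^n)$ by twisting the classifying map of a fiberwise $p$-completion of $\C P^n$ with a self-map of $\C P^n[2n+1]$. First, since $\overline{L}(\C P^n) = \C P^n \langle 2n+1 \rangle$ is $(2n+1)$-connected, the hypothesis that $L\overline{L}(\C P^n)$ is contractible holds. The Hopf fibration $S^1 \to S^{2n+1} \to \C P^n$ yields $\pi_k \C P^n \cong \pi_k S^{2n+1}$ for $k \geq 2$, so $\C P^n \langle 2n+1 \rangle \simeq S^{2n+1} \langle 2n+1 \rangle$, which splits as $\prod_p S^{2n+1}\langle 2n+1 \rangle^\wedge_p$ by the discussion of Section~\ref{sec selfequivalences}. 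Hence
\[
\overline G_{[2n+1]}(\C P^n) = [\C P^n[2n+1], \textstyle\prod_p B\aut{}{S^{2n+1}\langle 2n+1 \rangle^\wedge_p}]/\Aut{}{\C P^n[2n+1]}.
\]

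Fix a prime $p$ and let $c_p \colon \C P^n[2n+1] \to B\aut{}{S^{2n+1}\langle 2n+1 \rangle^\wedge_p}$ classify the fiberwise $p$-completion of $\C P^n \to \C P^n[2n+1]$. The space $\C P^n[2n+1]$ is the two-stage principal fibration $K(\Z, 2n+1) \to \C P^n[2n+1] \to K(\Z, 2)$ with $k$-invariant $x^{n+1}$, so a self-map acting by multiplication by an integer $a$ on $\pi_2$ must act by $a^{n+1}$ on $\pi_{2n+1}$; such a self-map $[a]$ exists by a straightforward obstruction argument, since the obstruction $a^{n+1} \cdot \pi^*(x^{n+1})$ vanishes in $H^{2n+2}(\C P^n[2n+1]; \Z)$. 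Set $c_{p, k} = c_p \circ [p^k]$. To distinguish these maps, restrict along the fiber inclusion $j \colon K(\Z, 2n+1) \hookrightarrow \C P^n[2n+1]$. Writing $K(\Z, 2n+1) = \hofib(\C P^n[2n+1] \to K(\Z, 2))$, the pullback $\C P^n \times_{\C P^n[2n+1]} K(\Z, 2n+1)$ identifies with $\hofib(\C P^n \to K(\Z, 2)) = S^{2n+1}$, so $c_p \circ j$ classifies the partially $p$-completed sphere $S^{2n+1}_p$ of Construction~\ref{exmp:plocal}. Since $K(\Z, 2n+1)$ is an Eilenberg--Mac~Lane space, the homotopy class of $[p^k] \circ j$ is determined by its action on $\pi_{2n+1}$, namely multiplication by $p^{k(n+1)}$, so $[p^k] \circ j$ is homotopic to $j$ precomposed with this multiplication on $K(\Z, 2n+1)$. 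Under the bijection of Theorem~\ref{thm:plocal}, the class $c_{p, k} \circ j$ thus corresponds to $k(n+1) \in \N_+$, giving distinct elements for distinct $k \in \N$.

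Assembling across all primes, each $K = (k_p) \in \prod_p \N$ yields a map $c_K$ whose $p$-components are the $c_{p, k_p}$, and whose fiber-restriction invariant is $(k_p(n+1))_p \in \prod_p \N_+$. The action of $\Aut{}{\C P^n[2n+1]} = \{\pm 1\}$ precomposes $c_{p, k}$ with $[-1]$ to produce $c_p \circ [-p^k]$, whose fiber restriction corresponds to $(-1)^{n+1} p^{k(n+1)}$; since $(-1)^{n+1}$ is a unit in $\Z^\wedge_p$, this is absorbed in the quotient $\Z^\wedge_p/(\Z^\wedge_p)^\times$ exactly as in the proof of Theorem~\ref{thm:classification}, so the fiber invariant is preserved and the induced map $\prod_p \N \to \overline G_{[2n+1]}(\C P^n)$ remains injective. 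As $\prod_p \N$ is uncountable, the theorem follows. The delicate technical point is the identification of $c_p \circ j$ with the classifying map of $S^{2n+1}_p$, which relies on the pullback-of-fibrations argument above combining the Postnikov section $\C P^n \to \C P^n[2n+1]$ with the fiber sequence defining $K(\Z, 2n+1)$.
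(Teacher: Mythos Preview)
Your proof is correct and follows essentially the same approach as the paper: apply Theorem~\ref{thm:fakes}, precompose the classifying map with self-maps $[p^k]$ of $C=\C P^n[2n+1]$, and distinguish the resulting classes by restricting along the fibre inclusion $K(\Z,2n+1)\hookrightarrow C$ via Theorem~\ref{thm:plocal}. You compute the fibre-restriction exponent as $k(n+1)$ from the $k$-invariant $x^{n+1}$, while the paper records it as $nk$; your value is the correct one, though the discrepancy does not affect the argument.
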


\begin{proof}
Let $C=\C P^n[2n+1]$ be the $(2n+1)$-st Postnikov
section of $\C P^n$. There are fibrations $K(\Z, 2n+1) \to C \to
K(\Z,2)$ and $S^{2n+1}\langle 2n+1 \rangle \to \C P^n \to C$ where
$S^{2n+1}\langle 2n+1 \rangle$ decomposes as $\prod_p S^{2n+1}\langle 2n+1
\rangle^\wedge_p$.  Obstruction theory shows that $[\C P^n, \C P^n]
\to [C,C]$ is bijective so that self-maps of $\C P^n$ and $C$ are
classified up to homotopy by their degrees in $H^2(-;\Z)$. In
particular, $\Aut{}C \cong \Aut{}{\C P^n} \cong \Z^\times = \{ \pm 1
\}$ has two elements.

According to Theorem~\ref{thm:fakes}, $\overline G_{[2n+1]}(\C P^n) =
[C,B\aut{}{S^{2n+1}\langle 2n+1 \rangle}]/\Z^\times$.

Let $c \colon C \to B\aut{}{S^{2n+1}\langle 2n+1 \rangle}$ be the
classifying map for the standard $\C P^n$. We have seen that
$B\aut{}{S^{2n+1}\langle 2n+1 \rangle}$ splits as a product $\prod_p
B\aut{}{S^{2n+1}\langle 2n+1 \rangle^\wedge_p}$, and so
the classifying map $c$ decomposes as a product $c=\prod_pc_p \circ
\Delta$, where $\Delta \colon C \to \prod_pC$ is the diagonal map
and $c_p \colon C \to B\aut{}{S^{2n+1}\langle 2n+1
\rangle^\wedge_p}$. For any sequence $m=(m_p)_p \in \prod_p\Z$ of
integers, let $P^n_m$ be the space classified by $\prod c_p \circ
\prod m_p \circ \Delta \colon C \to \prod B\aut{}{X_p}$. For example
$\C P^n$ and $C \times S^{2n+1} \langle 2n+1 \rangle$ correspond
respectively to the constant sequences $(1)$ and $(0)$.

Consider now the restriction of one of the components $c_p \circ
m_p$ to the fiber $K(\Z, 2n+1)$. The degree $m_p$ map on $\C P^n$
induces the degree $m_p^n$ map on the cover $S^{2n+1}$, so that this
restriction corresponds to the class of $m_p^n$ in the coset
$\Z^\wedge_p/(\Z^\wedge_p)^\times$ we obtained in
Theorem~\ref{thm:plocal}. In particular, for any choice $m_p = p^k$
this restriction represents a different homotopy class in $[K(\Z,
2n+1), B\aut{}{S^{2n+1}\langle 2n+1 \rangle^\wedge_p}]$. In fact for
any choice $k_p \in \N$, the sequences $m = (p^{k_p})$ yield an
uncountable number of homotopy types of fake complex projective
spaces. Indeed the spaces $P_m^n$ are all distinct since the
homotopy pull-back of $P^n_m \rightarrow C \leftarrow K(\Z, 2n+1)$
is homotopy equivalent to the fake sphere described by the sequence
$(nk_p)$ as in Construction~\ref{rem:allfakespheres}.
\end{proof}


\bibliographystyle{amsplain}
\providecommand{\bysame}{\leavevmode\hbox to3em{\hrulefill}\thinspace}
\providecommand{\MR}{\relax\ifhmode\unskip\space\fi MR }
\providecommand{\MRhref}[2]{%
  \href{http://www.ams.org/mathscinet-getitem?mr=#1}{#2}
}
\providecommand{\href}[2]{#2}


\end{document}